\newtheorem{lemma}{Lemma}
\newtheorem{definition}{Definition}
\newtheorem{theorem}{Theorem}%[section]
\newtheorem{corollary}{Corollary}
\newtheorem{remark}{Remark}
\newtheorem{assumption}{Assumption}
\newcommand{\dimMatCom}[0]{K}
\newcommand{\dimMatF}[0]{n}
\newcommand{\dimMatC}[0]{m} % _{\dimMatF}
\newcommand{\matF}[0]{\ensuremath{W}}
\newcommand{\matC}[0]{\ensuremath{\tilde{W}}}
\newcommand{\matLinSens}[0]{\ensuremath{B}}
\newcommand{\comFine}[0]{\ensuremath{P}}
\DeclareMathOperator*{\argmax}{arg\,max}
\newcommand{\genSyncMat}[0]{\ensuremath{\Phi}}
\newcommand{\genSyncVec}[0]{\ensuremath{\mathbf{\phi}}}
\newcommand{\dimSum}[0]{\ensuremath{J}}
\newcommand{\syncComAs}[0]{\ensuremath{\mathbf{\genSyncMat}}}
\newcommand{\fullComAs}[0]{\ensuremath{\mathbf{\theta}}}
\newcommand{\protoCovSize}[0]{\ensuremath{r}}
\newcommand{\biMatC}[0]{\ensuremath{\matC^{(\textrm{b})}}}
\newcommand{\genComConnect}[0]{\ensuremath{U}}
\newcommand{\nonSyncDimMatCom}[1]{\ensuremath{\dimMatCom^{(#1)}}} 
\newcommand{\thresholdMatCin}[0]{\ensuremath{\tilde{\tau}}}
\newcommand{\COvar}[0]{\ensuremath{\nu}}
\newcommand{\intraComProb}[0]{\ensuremath{p}}
\newcommand{\extraComProb}[0]{\ensuremath{q}}
\newcommand{\constintraComProb}[0]{\ensuremath{\alpha}}
\newcommand{\constextraComProb}[0]{\ensuremath{\beta}}
\newcommand{\scaleFineComProb}[0]{\ensuremath{f(\dimMatF)}}
\newcommand{\upConstScaleFineComProb}[0]{\ensuremath{f_0}}
\newcommand{\scaleCoarseComProb}[0]{\ensuremath{\tilde{f}(\dimMatC,\dimMatF,\protoCovSize)}}
\newcommand{\scaleCoarseComProbAt}[2]{\ensuremath{\tilde{f}(\dimMatC,{#1},{#2})}}
\newcommand{\scaleComProbAt}[1]{\ensuremath{f(#1)}}
\newcommand{\scaleComProbAtPower}[2]{\ensuremath{f^{#2}(#1)}}
\newcommand{\sbmDegree}[0]{\ensuremath{d}}
\newcommand{\scaledCHdiv}[0]{\ensuremath{D}}
\newcommand{\CHdiv}[0]{\ensuremath{D_{+}}}
\newcommand{\genConstDegScale}[0]{\ensuremath{c_0}}
\newcommand{\profileSet}[0]{\ensuremath{\Upsilon^{(\COvar)}}}
\newcommand{\strictGreaterBigO}[0]{\ensuremath{\Omega}}
\begin{document}

% If your paper is accepted and the title of your paper is very long,
% the style will print as headings an error message. Use the following
% command to supply a shorter title of your paper so that it can be
% used as headings.
%
\runningtitle{Graph Community Detection from Coarse Measurements}

% If your paper is accepted and the number of authors is large, the
% style will print as headings an error message. Use the following
% command to supply a shorter version of the authors names so that
% they can be used as headings (for example, use only the surnames)
%
%\runningauthor{Surname 1, Surname 2, Surname 3, ...., Surname n}

\twocolumn[

\aistatstitle{Graph Community Detection from Coarse Measurements: Recovery Conditions for the Coarsened Weighted Stochastic Block Model}
 
\aistatsauthor{ Nafiseh Ghoroghchian \And Gautam Dasarathy  \And  Stark C. Draper }
% nafiseh.ghoroghchian@mail.utoronto.ca
% gautamd@asu.edu
% stark.draper@utoronto.ca
\aistatsaddress{University of Toronto \\  Vector Institute  \And  Arizona State University \And University of Toronto   } 
% \thanks{This work was supported in part by the Natural Sciences and Engineering Research Council (NSERC) of Canada, including through a Discovery Research Grant.}
% \thanks{N. Ghoroghchian is with the Edward S. Rogers Sr. Dept. of Electrical and Computer Engineering, University of Toronto, Toronto, ON M5S 3G4, Canada; Postgraduate affiliate at the Vector Institute for Artificial Intelligence, Toronto, Canada.. (e-mail: nafiseh.ghoroghchian@mail.utoronto.ca).}
% \thanks{G. Dasarathy. is with the School of Electrical, Computer, and Energy Engineering, Arizona State University, USA (email: gautamd@asu.edu ).}
% \thanks{ S.~C.~Draper is with the Edward S. Rogers Sr. Dept. of Electrical and Computer Engineering, University of Toronto, Toronto, ON M5S 3G4, Canada. (e-mail: stark.draper@utoronto.ca).}
]

\begin{abstract}
We study the problem of community recovery from coarse measurements of a graph. In contrast to the problem of community recovery of a fully observed graph, one often encounters situations when measurements of a graph are made at low-resolution, each measurement integrating across multiple graph nodes. Such low-resolution measurements effectively induce a coarse graph with its own communities. Our objective is to develop conditions on the graph structure, the quantity, and properties of measurements, under which we can recover the community organization in this coarse graph. In this paper, we build on the stochastic block model by mathematically formalizing the coarsening process, and characterizing its impact on the community members and connections. Through this novel setup and modeling, we characterize an error bound for community recovery. The error bound yields simple and closed-form asymptotic conditions to achieve the perfect recovery of the coarse graph communities.  
\end{abstract}
% \vspace{-3in}

\section{Introduction}
% In this paper, we study the problem of community detection from a coarse measured graph.
% What do we know? Background and context
% 		Why important?
Community detection (a.k.a. clustering) in a graph is the problem of identifying groups of nodes with similar behaviour  \citep{fortunato2016community,von2007tutorial,abbe2017community}. Identifying communities is usually the first analysis tool used to draw an initial observation from data \citep{yang2013overlapping}.
% 		community definition
A community in a graph refers to a group of nodes that are more similar to each other than to the rest of the graph.
The notion of similarity most conventionally means assortativity, i.e. denser intra-community links in an unweighted graph where no weight or label is associated with the graph edges \citep{fortunato2010community}. However, the group similarity notion has been extended to other forms of connectivity, as well as to weighted networks \citep{fortunato2016community}.
% Examples
Cluster formation is proven to be a universal structure in real networks \citep{yang2015defining}.
% numerous applications, including anatomical and functional brain networks \citep{sporns2016modular}, collaboration network \citep{hou2008structure}, and social networks \citep{yang2015defining}.
As a result, detecting communities in networks has become a central question to a great body of prediction and inference tasks, with applications in 
network neuroscience \citep{sporns2016modular, bassett2017network,betzel2019community}, social networks \citep{yang2013overlapping}, collaboration networks \citep{hou2008structure}, and biological networks \citep{girvan2002community}.

%% ----------------------------------------------------------------------
% Where is the gap?
% When partial?
While existing methods for community detection have been effective in modeling, studying, and recovering communities from finely detailed, high-resolution graphs \citep{fortunato2016community}, 
%there is a gap when it comes to partially observed graphs. 
there are various scenarios where a large-scale graph is not fully observable and should be coarsened
% Examples
% Observing a graph partially may happen due to various reasons such as
% Partial observation of a graph happens 
due to restrictions imposed by the measuring instrument (will be exemplified shortly) \citep{betzel2017multi}, limitations of the storage memory, high sampling costs, computational tractability \citep{dabagiacomparing,serrano2009extracting}, restricted accessibility to data, and the creation of multi-scale representations for graphs \citep{safro2015advanced,loukas2019graph}. 
% This motivates a large body of literature around graph embedding \citep{}, graph reduction \citep{}, coarsening \citep{}, graph compression \citep{}; 
% When studying a social graph, it is inevitable to get samples from groups of people rather than each individual in a very large population, e.g. of a million individuals; 
% When it is too costly to store a very large graph in memory, a graph compression scheme is required \citep{};
% connection: gap&partial
Discovering the latent community structure from the coarse measured graph is a valuable objective of many graph-based tasks \citep{mucha2010community,betzel2019community}.

\begin{figure*}[!htp]
%\vspace{.3in}
\centerline{\includegraphics[width=17.4cm]{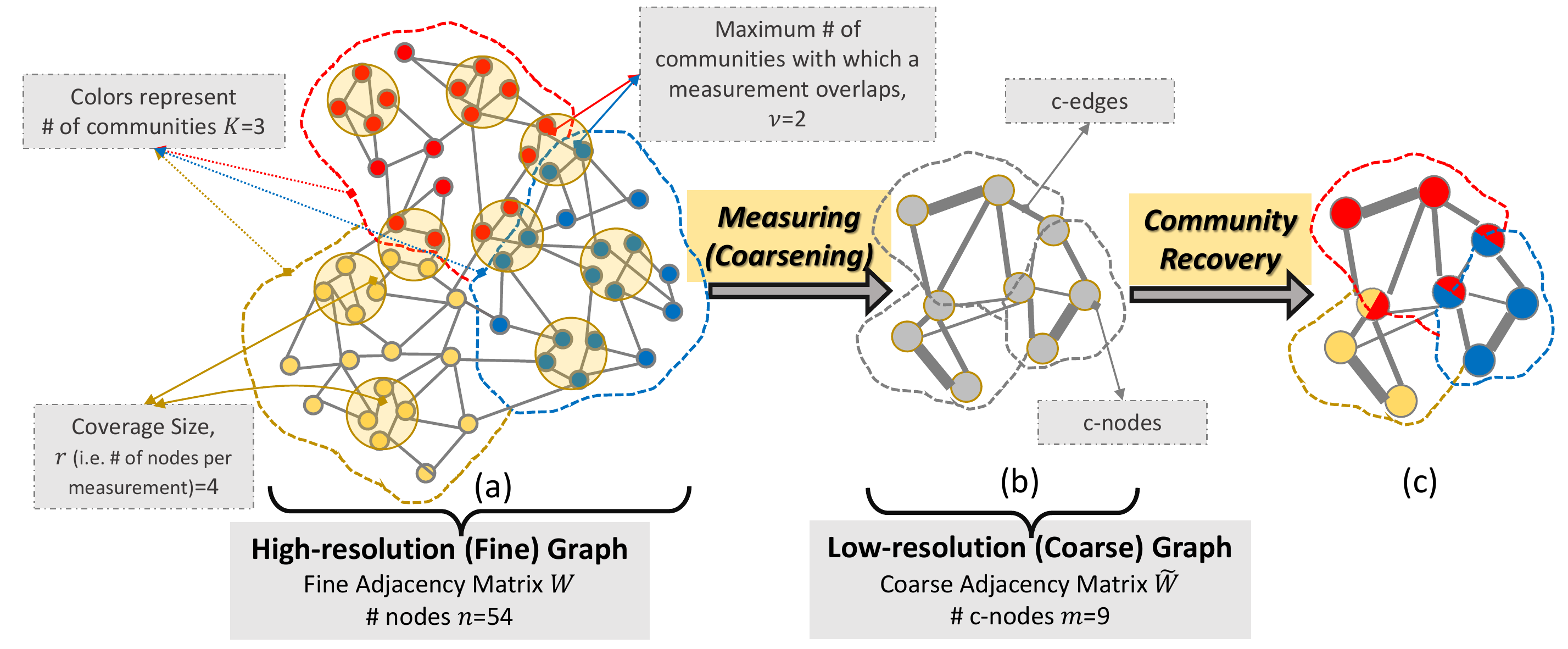}}\label{fig:visual_coarsening_communities}
%\vspace{.3in}
\caption{\small Visual illustration of (a) the underlying high-resolution (fine) graph, (b) the measurement (coarsening) procedure whose result is modeled as a coarse graph, and (c) the effect of the coarsening on the community structure, whose recovery is the objective of this paper. Some notations used in this paper, with their values realized for this figure, are annotated.}
\end{figure*}

% Motivation/Significance 
Although conventional community detection models can be directly applied to the coarse measured graphs \citep{betzel2017multi}, a fundamental understanding of the impact of coarsening on the community structure and recovery is missing. 
Fig.~\ref{fig:visual_coarsening_communities} illustrates how the coarse measurement process can obscure the high-resolution graph structure. The figure shows that as coarsening reduces the size of the graph, introduces heterogeneity in the edge weights, which can potentially cause a drift away from the true community structure. 

% Why community detection from partial is important?
The study of clustering from coarse measured graphs enables the characterization of contributing factors to their community recovery. Such characterization leads to identifying the barriers in community detection from a coarse graph, which can potentially improve the clustering by applying adjustments to the measurement and community recovery process.
% Example
Such clustering characterization and recovery improvement are crucial to many fields including neuroscience. Often in the study of the brain on a large scale, the scientific measuring instruments are quite coarse and cannot directly monitor the activity of all the neurons in the brain, which is as high as $14$ billion. Hence, one is restricted to collect aggregate signals from bundles of neurons \citep{osorio2016epilepsy,ghoroghchian2020node}, from which a low-resolution functional brain graph is generated \citep{friston2011functional,ghoroghchian2018hierarchical}. The communities identified in the measured graph have been connected to brain cognitive and behavioral units, and they provide biomarkers for neurological diseases \citep{sporns2016modular,bassett2017network, lynn2019physics, patankar2020path}.

%% --------------------------------------------------------------

% How do we fill the gap? Hypothesis? 
% community detection review and SBM
\textbf{Contributions}: 
In this paper, we study the community detection from coarse measured graphs, which to the best of our knowledge is the first analysis of this problem:
% Contributions
\begin{itemize}
    % Modeling
    \item A random generative model is introduced for the coarse measured networks. A mathematical framework is defined that characterizes the measurement process, the coarse graph, as well as the relationship between the community structure of the fine and coarse graphs.
    % recovery conditions
    \item Simple and closed-form asymptotic conditions are developed on the graph structure, the quantity and properties of the measurements,  under which the community organization of the coarse graph is recovered. The recovery error is characterized, which facilitated studying the effects of various measurement- and structure-related parameters, who take part in improving or exacerbating the quality of the recovery. 
    % Simulations
    \item Simulations are provided to compare the derived theoretical error bound with the performance of state-of-the-art community detection methods.
\end{itemize}
\textbf{Related Work}: 
{
% To the best of our knowledge, this is the first work in which community detection from coarse measurements is formulated, characterized, and studied. 
While the problem of coarsening a \textit{known graph} has received considerable attention in the past \citep{karypis1998fast,harel2001clustering,kushnir2006fast,safro2015advanced,loukas2019graph,rahmani2020scalable}, to the best of our knowledge, this paper is the first to consider learning community structure from coarse summaries of an \textit{unknown graph}.
}

This paper is built upon the stochastic block model (SBM), a random generative model that is widely used as a canonical model in community detection literature \citep{abbe2017community}.
Although there are other approaches to detect communities, mainly based on modularity maximization and statistical inference \citep{fortunato2016community,javed2018community}, there are advantages to SBM that fit it to our purposes. SBM
provides a rich benchmark that facilitates its generalization to numerous variants  \citep{abbe2017community,fortunato2016community,funke2019stochastic}. Furthermore, the generative nature of SBMs allows for characterizing communities and their recovery \citep{abbe2017community}, which particularly serves the improvement of community detection. 
{
We start by using the vanilla symmetric SBM to model the fine scale graph, which we consider a \emph{latent model} that underlies the observed coarse graph. Under this model, we show that the coarse graph becomes a weighted and mixed membership (or overlapping) variant of the SBM. 
}

{
The mixed membership SBM (MMSBM) is another relevant paradigm to our purposes and could serve as a good model when directly applied at the measurement (coarsened) level. However in the current paper, we start with a model of the fine graph and characterize the  coarse model as a function of the coarsening/measurement procedure. This is more natural given our goal is to infer community information about the underlying fine graph. Relatedly, as far as we know, most papers on MMSBM such as \citep{dulac2020mixed} are algorithmic-oriented and do not contain theoretical analysis of the community recovery performance similar to our work in this paper. Few existing works that include theoretical analysis \citep{mao2017mixed}, do not model weighted edges and do not focus on coarsening, the two components that are crucial to our setup.
}

\section{Model}\label{sec:model}
Consider an unweighted graph $G=G(\mathcal{V},E)$, where $\mathcal{V}$ is a set of nodes of cardinality $|\mathcal{V}|=\dimMatF$, and $E$ is a set of pairs of nodes, referred to as edges. Alternative to $E$ and since the graph is unweighted, we can represent the edges using an adjacency matrix $\matF\in\lbrace 0,1\rbrace^{\dimMatF\times \dimMatF}$, where each node of the graph is labeled by a unique number in the index set $[\dimMatF]\triangleq \lbrace 1,2,\cdots,\dimMatF \rbrace$, and $\matF_{uv}=1$ shows the existence of an edge between nodes $u$ and $v$. 

We assume an underlying community structure on $\mathcal{V}$, which partitions the node set into disjointed sets $\mathcal{V}=\displaystyle\cup_{k=1}^{\dimMatCom}\mathcal{V}_k$. For all $k\in[\dimMatCom]$, $\mathcal{V}_k$ represents the set of the nodes that belong to community $k$. Each node belongs to only one of the $\dimMatCom$ communities.
The intra-connection among nodes in the same community is different from their connection to the rest of the graph. 
Let $\comFine\in\lbrace 0,1\rbrace^{\dimMatCom\times \dimMatF}$ be the true community assignment matrix, where $\comFine_{ku}=1$ iff node $u$ belongs to community $k$, i.e. 
\begin{align}
   \comFine_{ku}=\left\lbrace
   \begin{array}{ll}
        1 & \textrm{if } u\in\mathcal{V}_k  \\
        0 & \textrm{else}
   \end{array}
   \right..
\end{align}
% We denote the $k$-th row of $\comFine$ by $\mathbf{p}_k$.

A graph is drawn under the Symmetric Stochastic Block Model (SSBM) characterised by $\intraComProb$ and $\extraComProb$, where the probability of having an edge between two nodes is independently distributed according to $\textrm{Bernoulli}(\intraComProb)$, for two nodes in the same community, and $\textrm{Bernoulli}(\extraComProb)$ for nodes in different communities. Also, the nodes are assigned to communities in a uniform and independent manner. We let $\matF$ be distributed according to  $\matF\sim \textrm{SSBM}(\dimMatF,\dimMatCom,\intraComProb,\extraComProb)$ conditional on $\comFine$, i.e.,
\begin{align}\label{SSBM_dist}
    \matF_{uv}\sim\left\lbrace \begin{array}{cc}
        \textrm{Bernoulli}(\intraComProb) & \textrm{if } \exists k\in[\dimMatCom]: \comFine_{ku}=1, \comFine_{kv}=1 \\
        \textrm{Bernoulli}(\extraComProb) & \textrm{else}
    \end{array}\right.,
\end{align} 
We assume a general scaling behaviour for $\intraComProb,\extraComProb$ by defining the constants $0<\alpha,\beta<\infty$ and a scaling factor $\scaleFineComProb$, where:
\begin{align}\label{comConnect_constant_with_scaling}
    \intraComProb\triangleq \constintraComProb\scaleFineComProb, \quad \extraComProb\triangleq \constextraComProb\scaleFineComProb.
\end{align}
%where $\scaleFineComProb$ is a decaying function with respect to its argument. 
$\scaleFineComProb$ tracks the changes in the graph density as a function of the graph size. As $\dimMatF$ increases, $\scaleFineComProb$ may remain unchanged, or it may get smaller, i.e. the graph becomes sparser as it grows. The latter sparsity assumption has been considered in existing literature, as it fits to many real-world applications, including biological, social, and collaborative networks \citep{abbe2017community,mossel2014consistency,abbe2015exact,abbe2015community}.

In real applications, $G$ can be very large, in the order of millions or billions of nodes.  In general, the population is much larger than the number of communities (e.g., there are many more citizens than cities) and so $\dimMatF\gg\dimMatCom$. We often cannot observe the existence (or lack of existence) of all $\frac{\dimMatF(\dimMatF-1)}{2}$ possible connections and instead measure $\dimSum$ \textit{summaries} of associations. 

One possible choice to collect a simplified and interpretable set of  summary measurements (more explanations come shortly), is to define a set of \textit{disjointed} measurement vectors $\lbrace \mathbf{b}_1,\mathbf{b}_2, \cdots, \mathbf{b}_{\dimMatC}\rbrace$, all satisfying $\mathbf{b}_i\in\lbrace 0,1\rbrace^{\dimMatF}$ and $\mathbf{b}_i\mathbf{b}_j^\intercal =0$ for all $i\in[\dimMatC]$ different from $j\in[\dimMatC]$. The latter condition means measurement vectors do not overlap, i.e. each node is measured at most one time. Each summary, denoted by $s_{\ell}$ for $\ell\in[\dimMatC^2]$, is defined as:
\begin{align}\label{summary_measurements}
\begin{array}{rl}
     s_{\ell} & =\displaystyle\sum_{u\in \textrm{supp}(\mathbf{b}_{\lceil \ell/\dimMatC\rceil})} \displaystyle\sum_{v\in \textrm{supp}(\mathbf{b}_{\ell \textrm{ mod } \dimMatC})} \matF_{uv} \\
     & = \mathbf{b}_{\lceil \ell/\dimMatC\rceil}\matF\mathbf{b}_{\ell \textrm{ mod } \dimMatC}^\intercal.
\end{array}
\end{align}
$\textrm{supp}(\mathbf{b}_{i})$ denotes the support of $\mathbf{b}_{i}$ and $|\textrm{supp}(\mathbf{b}_{i})|$ is the cardinality of the support.
Equation \eqref{summary_measurements} corresponds to the set of summary measurements one would get if one defines an $\dimMatC\times\dimMatF$ matrix whose rows are $\mathbf{b}_1,\mathbf{b}_2, \cdots, \mathbf{b}_{\dimMatC}$, and then collects $\dimMatC^2$ non-distinct (or $\frac{\dimMatC(\dimMatC+1)}{2}$ distinct) measurements as in \eqref{summary_measurements}, forming the following matrix equality:
\begin{align}\label{linear_coarsening_model}
    {\matC} = \matLinSens {\matF} \matLinSens^\intercal.
\end{align}  
% The coarse matrix in \eqref{linear_coarsening_model} corresponds to a particular set of summary measurements one would get if one defines the $\dimMatC\times\dimMatF$ matrix $\matLinSens=[b_1 b_2 \cdots b_{\dimMatC}]^\intercal$ and then collects the $\dimMatC^2$ (not distinct) measurements.
% Note that by adding the subscript $\dimMatF$ to notations, we emphasize their possible dependence on the underlying fine graph size $\dimMatF$.
Such measurement model 
%is important for two reasons. First, it naturally comes up in existing measurement models and applications. 
is a natural choice in existing applications.
For instance, linear measurement of a high-dimensional signal appear in compressed sensing \citep{donoho2006compressed,draper2009compressed} which is further applied to Electroencephalogram (EEG) signal processing \citep{aviyente2007compressed} and image processing \citep{baraniuk2007compressive}, as well as in Covariance sketching \citep{dasarathy2015sketching}. For such linear measurements, the original and the measured graphs respectively model the Covariance (here, thresholded for weighted graphs) matrices of the original and the linearly measured signals. The measurement model in \eqref{linear_coarsening_model} is also a popular graph reduction method, where $\matC$ approximates $\matF$ by preserving some of its spectral properties \citep{safro2015advanced, loukas2019graph,jin2020graph}.  
%Second, the interpretability of the modeling introduced by the measurement choice in \eqref{linear_coarsening_model} is later exploited to solve the problem of community recovery from the coarse graph. 

Matrix $\matC$ can be thought of as the weighted adjacency matrix representation of a measured weighted graph $\tilde{G}\in\tilde{\mathcal{G}}(\tilde{\mathcal{V}},\tilde{E})$, where
$\tilde{\mathcal{V}}$ is the set of \textit{c-nodes}
\footnote{{
``c-'' stands for compound or coarse.
}} 
and $|\tilde{\mathcal{V}}|=\dimMatC$. 
$\tilde{E}$ is the set of \textit{c-edges}, consisting of pairs of c-nodes and a weight, i.e., $(i,j,\tilde{w})$.
%We refer to $\tilde{G}$ as a graph consisting of $|\tilde{\mathcal{V}}|=\dimMatC$ \textit{c-nodes}. 
Note that $\matC_{ij}$'s for all $i>j$ are independent random variables if the $\mathbf{b}_i$'s are disjoint. We return to this point, and the formal statistics of $\matC$, shortly.
% The problem of interest is to estimate community assignment of nodes in $\matF$ when it is observed at different granularities, i.e. for different values of $\dimMatC\leq\dimMatF$. 
% If the row support of $\matLinSens$ is normalized, we have $b_i\in\lbrace0,\frac{1}{|\textrm{supp}(b_i)|}\rbrace^{\dimMatF}$. But in this current draft, we use the unnormalized sensing vectors, i.e. $b_i\in\lbrace 0,1\rbrace^{\dimMatF}$. 

\begin{definition}\label{def_homogenous}
A measurement matrix $\matLinSens$ is ``$\protoCovSize$-homogeneous'' if for all  $i\in[\dimMatC]$ there is a constant positive integer $\protoCovSize\leq \frac{\dimMatF}{\dimMatC}$ such that  $|\textrm{supp}(\mathbf{b}_i)|=\protoCovSize$.
\end{definition}
We assume the number of measured fine nodes that represent a c-node is the same for all c-nodes. We refer to this number as the \textit{coverage size} and denote it by $\protoCovSize$. Accordingly, the support of the rows of a homogeneous measurement matrix has cardinality equal to $\protoCovSize$.
We define the \textit{c-node profile} matrix:
\begin{align}\label{gen_profile_def}
    \genSyncMat \triangleq \matLinSens\comFine^\intercal,
\end{align}
whose dimension is $\dimMatC\times\dimMatCom$ and connects the measurement matrix $\matLinSens$ to the graph of community assignment matrix $\comFine$. $\genSyncMat$ displays the impact of coarsening on the community memberships. A c-node can belong to one community or multiple communities. Each row of the c-node profile matrix, $\genSyncVec_i$, is a length-$\dimMatCom$ vector that counts the number of nodes in each community in $G$ that is measured by the $i$-th c-node.
For instance, $\genSyncMat=\left[\begin{array}{ccc}
     0 & 4 & 0 \\
     2 & 0 & 2 \\
     4 & 0 & 0 \\
     0 & 2 & 2 
\end{array}\right]$ means that, all the $4$ fine nodes that map to the first (resp. the third) c-node belong to community $2$ (resp. $1$), while half of the $4$ fine nodes mapping to the second c-node belong to community $1$ and the other half belong to community $3$. 
% We denote the row support of $\matLinSens$ as $\setProto_{i} \triangleq \lbrace u|\matLinSens_{iu}\neq 0 \rbrace$.

The following Lemma derives the statistics of $\matC$.
\begin{lemma}\label{lemma:coarse_mat_stat}
Let $\matF\sim \textrm{SSBM}(\dimMatF,\dimMatCom,\intraComProb,\extraComProb)$ from which $\matC$ in \eqref{linear_coarsening_model} is measured under the $\protoCovSize$-homogeneous measurement assumption defined in Def.~\ref{def_homogenous}. Then $\matC_{ij}$'s are i.i.d. random variables for all $i>j$, with distribution:
\begin{align}\label{homogeneous_general_dist_matC}
    \begin{array}{ll}
       \matC_{ij} \sim & \textrm{PoissonBinomial}(\lbrace \intraComProb\rbrace^{\genSyncVec_i^\intercal\genSyncVec_j}, \lbrace \extraComProb \rbrace^{\protoCovSize^2-\genSyncVec_i^\intercal\genSyncVec_j})
    \end{array},
\end{align}
where the $\textrm{PoissonBinomial}$ in \eqref{general_dist_matC}, is a compact notation for a Poisson Binomial distribution with success probabilities $\genSyncVec_i^\intercal\genSyncVec_j$ of $p$'s and $\protoCovSize^2-\genSyncVec_i^\intercal\genSyncVec_j$ of $q$'s.
\end{lemma}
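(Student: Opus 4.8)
The plan is to compute the law of a single coarse entry directly from its definition and then obtain the joint law by establishing mutual independence across entries from the disjointness of the measurement supports. Write $S_i \triangleq \textrm{supp}(\mathbf{b}_i)$, so that by $\protoCovSize$-homogeneity $|S_i| = \protoCovSize$ for every $i \in [\dimMatC]$, and by the disjointness assumption $\mathbf{b}_i \mathbf{b}_j^\intercal = 0$ the sets $S_1,\dots,S_{\dimMatC}$ are pairwise disjoint. Unfolding \eqref{linear_coarsening_model} entrywise gives $\matC_{ij} = \mathbf{b}_i \matF \mathbf{b}_j^\intercal = \sum_{u\in S_i}\sum_{v\in S_j}\matF_{uv}$, a sum of exactly $\protoCovSize^2$ terms. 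Because $S_i\cap S_j=\emptyset$ for $i\neq j$, every pair $(u,v)$ has $u\neq v$, so each summand is a genuine off-diagonal edge variable of the SSBM; moreover, since the two index sets are disjoint, the $\protoCovSize^2$ ordered pairs $(u,v)$ correspond to $\protoCovSize^2$ distinct unordered pairs $\{u,v\}$, so no edge variable is double-counted and the summands are mutually independent.

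Next I would identify the success probabilities. By \eqref{SSBM_dist}, each $\matF_{uv}$ is $\textrm{Bernoulli}(\intraComProb)$ when $u,v$ lie in a common community and $\textrm{Bernoulli}(\extraComProb)$ otherwise. By the definition $\genSyncMat=\matLinSens\comFine^\intercal$, the entry $[\genSyncVec_i]_k$ counts the fine nodes of $S_i$ assigned to community $k$, so the number of same-community pairs in $S_i\times S_j$ is $\sum_{k=1}^{\dimMatCom}[\genSyncVec_i]_k[\genSyncVec_j]_k=\genSyncVec_i^\intercal\genSyncVec_j$, leaving $\protoCovSize^2-\genSyncVec_i^\intercal\genSyncVec_j$ cross-community pairs. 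Hence $\matC_{ij}$ is a sum of $\genSyncVec_i^\intercal\genSyncVec_j$ independent $\textrm{Bernoulli}(\intraComProb)$ variables and $\protoCovSize^2-\genSyncVec_i^\intercal\genSyncVec_j$ independent $\textrm{Bernoulli}(\extraComProb)$ variables, which is exactly the Poisson-Binomial law asserted in \eqref{homogeneous_general_dist_matC}.

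Finally, for the joint independence of $\{\matC_{ij}\}_{i>j}$, I would show that distinct upper-triangular coarse entries are built from disjoint collections of the underlying independent edges of $\matF$. Fix $(i,j)\neq(i',j')$ with $i>j$ and $i'>j'$, and suppose an unordered edge $\{u,v\}$ contributes to both. Using symmetry $\matF_{uv}=\matF_{vu}$, we may take $u\in S_i, v\in S_j$, and then either $u\in S_i\cap S_{i'},\,v\in S_j\cap S_{j'}$ or $u\in S_i\cap S_{j'},\,v\in S_j\cap S_{i'}$; pairwise disjointness forces either $(i,j)=(i',j')$ or $(i,j)=(j',i')$, the latter being excluded by $i>j$ and $i'>j'$. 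So the edge sets are disjoint, and since the edges of $\matF$ are mutually independent, so are the $\matC_{ij}$, conditional on $\genSyncMat$. I expect this bookkeeping — tracking the symmetry of $\matF$ together with the $i>j$ restriction so that no fine edge is shared between two coarse entries — to be the only delicate step, after which the per-entry law follows immediately from the Poisson-Binomial definition. I would also remark that the entries are ``identically distributed'' only in the sense of belonging to the same Poisson-Binomial family: their parameters coincide precisely when $\genSyncVec_i^\intercal\genSyncVec_j$ is constant across pairs, so the substantive content is the mutual independence together with the explicit per-pair form.
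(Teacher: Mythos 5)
Your proof is correct and follows essentially the same route as the paper's: expanding $\matC_{ij}=\mathbf{b}_i\matF\mathbf{b}_j^\intercal$ into a sum of independent Bernoulli edge variables, counting the $\genSyncVec_i^\intercal\genSyncVec_j$ same-community pairs versus the $\protoCovSize^2-\genSyncVec_i^\intercal\genSyncVec_j$ cross-community pairs, and invoking disjointness of the measurement supports for independence. The paper's version is terser---it simply asserts independence from disjointness---so your explicit bookkeeping (distinctness of the unordered pairs, and the $i>j$ restriction preventing any shared fine edge between two coarse entries) and your caveat that the entries are ``identically distributed'' only up to the pair-dependent parameter $\genSyncVec_i^\intercal\genSyncVec_j$ are faithful elaborations of steps the paper leaves implicit.
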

The proof is elaborated in Sec.~\ref{appendix:proof_lemma_coarse_mat_stat}  of the supplementary materials.
 
Each c-node can measure from members of one or multiple communities. We denote the maximum number of communities that overlap with a c-node, by $\COvar$, where $1\leq\COvar\leq \dimMatCom$. This is considered as a Community Overlap (CO) constraint, and is illustrated in the next Definition. 
\begin{definition}\label{def:PS_def}
A measurement matrix $\matLinSens$ is CO-$\COvar$ with respect to a graph $G\in\mathcal{G}(\mathcal{V},E)$ with community assignment matrix $\comFine$, if the profile matrix $\genSyncMat = \matLinSens\comFine^\intercal$ satisfies: $1 \leq |\textrm{supp}( \genSyncVec_i)|\leq \COvar \quad \forall i\in[\dimMatC]$.
% \begin{align}
%     1 \leq |\textrm{supp}( \genSyncVec_i)|\leq \COvar \quad \forall i\in[\dimMatC].
% \end{align} 
\end{definition}
Def.~\ref{def:PS_def} means that the support of each row of $\matLinSens$ corresponds to at most $\COvar$ of the communities in $G$. 
The next definition is the last step to formalizing the coarse graph community structure.
\begin{definition}\label{def:balanced}
A measurement matrix $\matLinSens$ is ``balanced'' with respect to a graph $G\in\mathcal{G}(\mathcal{V},E)$ with community assignment matrix $\comFine$, if the profile matrix $\genSyncMat = \matLinSens\comFine^\intercal$  satisfies $\genSyncMat_{ik} = \genSyncMat_{ik'}$ for all $i\in[\dimMatC]$ and $k,k'\in \textrm{supp}(\genSyncVec_i)$.
% i.e. for all $i\in[\dimMatC]$ and $k,k'\in [\dimMatCom]$, if $\mathbf{b}_i\mathbf{p}^\intercal_k>0$ and $\mathbf{b}_i\mathbf{p}^\intercal_{k'}>0$, then $\mathbf{b}_i\mathbf{p}^\intercal_k = \mathbf{b}_i\mathbf{p}^\intercal_{k'}$.
\end{definition}
In other words, in a balanced-measured graph, an identical number of nodes are measured from each community.

The objective of this paper is to recover the c-node profile matrix $\genSyncMat$ from the measured graph $\matC$ in \eqref{linear_coarsening_model}. 
Let a maximum a posteriori ($\text{MAP}$) estimator take a measured graph $\tilde{G}$ with the true c-node profile matrix $\genSyncMat$, and returns its estimate $\hat{\genSyncMat}$ that assigns every c-node in $\tilde{G}$ to communities. We characterize an upper bound on the failure probability of the $\text{MAP}$ estimator. The error refers to assigning a wrong profile to at least one c-node, up to equivalent relabelling of communities. We also study the asymptotic conditions such that this error tends to zero. 

Recovering $\genSyncMat$ in \eqref{gen_profile_def} from the measured matrix $\matC$, without imposing additional constraints on $\genSyncMat$, is generally a very hard problem. Hence, we relax the problem to achieve tractability, by putting the constraints in Def. \ref{def_homogenous}, \ref{def:PS_def}, and \ref{def:balanced} on the measurement matrix (i.e. $\matLinSens$), with respect to the community assignment matrix (i.e. $\comFine$) of the graph. 
{
In many practical settings, assumptions such as homogeneity are reasonable. For instance, Electroencephalography (ECoG) signals are acquired from different brain regions using electrodes whose contact surface areas are the same.
% We believe that several pieces of analysis we perform here will carry over when the above assumptions only hold approximately. 
Nevertheless, a relaxation of these assumptions is of considerable interest and will serve as a compelling avenue for future exploration.
}

In the next section, we state and study the community recovery problem under the CO-$\COvar$ constraint.

\section{Recovery under the Community Overlap (CO)-$\COvar$ constraint} \label{sec:general_sync}
In this section, 
% we generalize the community recovery problem in Sec.~\ref{appendix:perfect_sync} by extending the constraint of CO-$1$ to CO-$\COvar$ introduced in Def. \ref{def:PS_def}
we derive an upper bound on the $\text{MAP}$ recovery error of the profile matrix ${\genSyncMat}$, as described at the end of Sec.~\ref{sec:model}. The recovered profile matrix $\hat{\genSyncMat}$, estimates at most $\COvar$ communities from which each c-node measures.

\subsection{Main Results} \label{subsec:mainResultsgeneral_sync}
We begin sketching our main result by defining 
\begin{align}\label{size_extended_coms}
\nonSyncDimMatCom{\COvar} = \displaystyle\sum_{\ell=1}^{\COvar}{\dimMatCom\choose \ell},
\end{align}
the profile set:
\begin{align}\label{profile_set}
\begin{array}{l}
\profileSet \triangleq \lbrace  \genSyncVec_i| \genSyncVec_i = \mathbf{b}_i\comFine^\intercal, 1 \leq |\textrm{supp}(\genSyncVec_i)|\leq \COvar,
\\
 \qquad\qquad\quad\forall k\in \textrm{supp}(\genSyncVec_i): \genSyncVec_{ik} = \frac{\protoCovSize}{|\textrm{supp}(\genSyncVec_i)|} 
    % \genSyncVec_{ik'}, \displaystyle\sum_{k=1}^{\dimMatCom} \genSyncVec_{ik} = \protoCovSize  
    \rbrace,
\end{array}
\end{align}
and a one-to-one function $h:\profileSet \xrightarrow[]{}[\nonSyncDimMatCom{\COvar}]$. Function $h$ maps a c-node profile to an \textit{extended} community indexed by $[\nonSyncDimMatCom{\COvar}]$ (more explanations in Sec.~\ref{subsec:nonSync_proof_techniques}). 
A probability matrix $U\in[0,1]^{\nonSyncDimMatCom{\COvar}\times\nonSyncDimMatCom{\COvar}}$ is defined, for all  $\mathbf{a},\mathbf{a}' \in \profileSet$ and $k=h(\mathbf{a}),k'=h(\mathbf{a}')$, as:
\begin{align}\label{U_poissonBinomial_initDef}
    \begin{array}{ll}
       \genComConnect_{k,k'}   & =
\mathbb{P}(X\geq \protoCovSize^2(\thresholdMatCin \intraComProb + (1-\thresholdMatCin)\extraComProb))
    \end{array},
\end{align}
where $0\leq\thresholdMatCin\leq 1$ and $X$ is an auxiliary random variable distributed as:
\begin{align}\label{X_auxiliaryRV_PoissBin_def}
    \begin{array}{ll}
       X \sim & \textrm{PoissonBinomial}(\lbrace \intraComProb\rbrace^{\mathbf{a}^\intercal\mathbf{a}'}, \lbrace \extraComProb \rbrace^{\protoCovSize^2-\mathbf{a}^\intercal\mathbf{a}'})
    \end{array}.
\end{align}
Sec.~\ref{subsec:nonSync_proof_techniques} will elaborate on the reasons behind these definitions, using the \textit{binarization} of the coarse measured graph, i.e. mapping the c-edge weights to zero or one. Sec.~\ref{subsec:nonSync_proof_techniques} shows that the elements of matrix $\genComConnect$ in \eqref{U_poissonBinomial_initDef} essentially denote the probability of having a connection between members of the extended communities (or equivalently between c-node profiles), in the binarized coarse graph. The prior distribution on the extended communities is denoted by the probability vector $\mathbf{s}$. We define the scaled Chernoff-Hellinger (CH) divergence as:
\begin{align}\label{scaled_CH_divergence_def}
    \begin{array}{l}
         \scaledCHdiv(\textrm{diag}(\mathbf{s})\genComConnect_{k},\textrm{diag}(\mathbf{s})\genComConnect_{k'}) 
         \triangleq \displaystyle\max_{0\leq t \leq 1}  \\
         \qquad\displaystyle\sum_{k''\in[\nonSyncDimMatCom{\COvar}]} \mathbf{s}_{k''}[t  \genComConnect_{kk''}
        + (1-t) \genComConnect_{k'k''} - \genComConnect_{kk''}^t \genComConnect_{k'k''}^{(1-t)}], 
    \end{array}
\end{align}
where the original CH divergence is $\CHdiv=\frac{\dimMatC}{\log\dimMatC}\scaledCHdiv$.
The following theorem provides an error bound for community recovery from the coarse graph. 
\begin{theorem}\label{thm:nonsync_UB_error}
Let $\matF\sim \textrm{SSBM}(\dimMatF,\dimMatCom,p,q)$ from which $\matC$ in \eqref{linear_coarsening_model} is measured under the $\protoCovSize$-homogeneous, balanced, and CO-$\COvar$ constraints. $\mathbf{s}$ is a length-$\nonSyncDimMatCom{\COvar}$ probability vector, $\genComConnect_k$ denotes the $k$-th column of matrix $\genComConnect$ defined in \eqref{U_poissonBinomial_initDef}, and $\nonSyncDimMatCom{\COvar}$ is defined in \eqref{size_extended_coms}.
The probability that the $\text{MAP}$ estimator fails to recover the c-node profile matrix $\genSyncMat$ from $\matC$ (up to relabelling of $\genSyncMat$'s columns) is upper-bounded by:
\begin{align}\label{UB_nonSync_MLFailureError}
    \mathbb{P}(\textrm{MAP failure}) \leq  
       \displaystyle\sum_{\substack{k,k'\in[\nonSyncDimMatCom{\COvar}]\\k<k'}}  e^{-\dimMatC \scaledCHdiv(\textrm{diag}(\mathbf{s})\genComConnect_{k},\textrm{diag}(\mathbf{s})\genComConnect_{k'})},
\end{align} 
where $\scaledCHdiv$ is the scaled CH divergence in \eqref{scaled_CH_divergence_def}.
\end{theorem}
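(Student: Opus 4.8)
**

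The plan is to recognize that Theorem~\ref{thm:nonsync_UB_error} is fundamentally a reduction of the coarse-graph recovery problem to exact recovery in an ordinary (unweighted) stochastic block model, for which a Chernoff--Hellinger-type bound is known (this is the style of result in Abbe--Sandon and related exact-recovery literature). The whole strategy rests on the \emph{binarization} idea already advertised in the statement: the MAP estimator for the profile matrix $\genSyncMat$ from the weighted coarse graph $\matC$ is analyzed by passing through a thresholded/binarized graph $\biMatC$, whose entries are Bernoulli random variables. First I would make the reduction precise: each distinct c-node profile $\mathbf{a}\in\profileSet$ is an ``extended community'' label via the bijection $h$, so that recovering $\genSyncMat$ up to column relabelling is exactly recovering the extended-community assignment of the $\dimMatC$ c-nodes. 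Under the balanced and CO-$\COvar$ constraints, the profile set is finite of size $\nonSyncDimMatCom{\COvar}$, the prior over these labels is $\mathbf{s}$, and — using Lemma~\ref{lemma:coarse_mat_stat} — the weight $\matC_{ij}$ between two c-nodes with profiles $\mathbf{a},\mathbf{a}'$ is PoissonBinomial with inner product $\mathbf{a}^\intercal\mathbf{a}'$ governing the mix of $p$'s and $q$'s. Binarizing at the threshold in \eqref{U_poissonBinomial_initDef} turns this into a Bernoulli edge with success probability $\genComConnect_{h(\mathbf{a}),h(\mathbf{a}')}$, which is precisely the connectivity matrix of a weighted/extended SBM on $\dimMatC$ nodes with $\nonSyncDimMatCom{\COvar}$ blocks.

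Next I would bound the MAP failure probability by a union bound over pairs of extended communities. The standard argument is that a misclassification event can occur only if, for some pair of distinct labels $k,k'\in[\nonSyncDimMatCom{\COvar}]$, the likelihood (or log-likelihood) of the data under the alternative label exceeds that under the true label for at least one c-node. Each such pairwise event is controlled by a large-deviations / Chernoff bound: the log-likelihood ratio between two candidate profiles, summed over the independent c-edges incident to a fixed c-node, concentrates, and its moment generating function is optimized over the Chernoff parameter $t\in[0,1]$. This is exactly what produces the scaled CH divergence $\scaledCHdiv(\textrm{diag}(\mathbf{s})\genComConnect_k,\textrm{diag}(\mathbf{s})\genComConnect_{k'})$ in \eqref{scaled_CH_divergence_def}: the $\mathbf{s}_{k''}$ weighting accounts for the expected fraction of c-nodes in each block $k''$ that a test c-node connects to, and the $t\genComConnect_{kk''}+(1-t)\genComConnect_{k'k''}-\genComConnect_{kk''}^t\genComConnect_{k'k''}^{1-t}$ term is the per-edge Chernoff exponent for distinguishing Bernoulli$(\genComConnect_{kk''})$ from Bernoulli$(\genComConnect_{k'k''})$. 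Multiplying the per-c-node exponent by the number of c-nodes $\dimMatC$ (hence the factor $\dimMatC$ in the exponent of \eqref{UB_nonSync_MLFailureError}) and summing over the $\binom{\nonSyncDimMatCom{\COvar}}{2}$ unordered pairs $k<k'$ yields the claimed bound.

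Concretely I would proceed in the following steps. (1) Establish the equivalence between profile recovery up to relabelling and extended-community recovery, and record the induced Bernoulli connectivity matrix $\genComConnect$ after binarization, citing Lemma~\ref{lemma:coarse_mat_stat} for the PoissonBinomial law and the threshold $\protoCovSize^2(\thresholdMatCin\intraComProb+(1-\thresholdMatCin)\extraComProb)$ from \eqref{U_poissonBinomial_initDef}. (2) Write the MAP decision rule on the binarized graph and define, for each ordered pair of labels, the error event that a c-node's true label $k$ is beaten by label $k'$. (3) Apply the Chernoff bound to the log-likelihood-ratio statistic over the incident independent c-edges; optimize over $t$ to obtain the per-node exponent equal to $\scaledCHdiv(\textrm{diag}(\mathbf{s})\genComConnect_k,\textrm{diag}(\mathbf{s})\genComConnect_{k'})$. (4) Union-bound over the $\dimMatC$ c-nodes and over all label pairs; the per-node count cancels against the $\log\dimMatC$ normalization implicit in the scaled divergence, leaving the factor $\dimMatC$ in the exponent and the pairwise sum in \eqref{UB_nonSync_MLFailureError}.

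I expect the main obstacle to be step (3): justifying that the binarized coarse graph behaves exactly like an SBM with \emph{independent} edges so that the classical CH-divergence machinery applies verbatim. The subtlety is that, although Lemma~\ref{lemma:coarse_mat_stat} guarantees the $\matC_{ij}$ are independent across distinct pairs $i>j$ (since the measurement vectors $\mathbf{b}_i$ are disjoint and the underlying SSBM edges are independent), one must verify that binarization preserves this independence and that the induced success probabilities depend on the pair $(i,j)$ \emph{only} through their extended-community labels $(k,k')$ — this is where the balanced and CO-$\COvar$ assumptions are essential, since they force every c-node with label $k$ to have the identical profile and hence the identical inner product with any label-$k'$ c-node. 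A secondary technical point is handling the prior $\mathbf{s}$ correctly inside the Chernoff exponent (the $\textrm{diag}(\mathbf{s})$ weighting) and the relabelling symmetry, so that the union bound is over \emph{unordered} pairs $k<k'$ rather than double-counted. Once the reduction to a bona fide extended SBM is clean, the divergence bound follows from the standard exact-recovery analysis.
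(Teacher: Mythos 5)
Your proposal follows essentially the same route as the paper: binarize $\matC$ at the threshold in \eqref{U_poissonBinomial_initDef}, map the balanced CO-$\COvar$ profiles in $\profileSet$ to extended communities via $h$ so that the binarized graph $\biMatC$ is distributed as $\textrm{SBM}(\dimMatC,\mathbf{s},\genComConnect)$ (the paper's Lemma~\ref{lemma:SBM_genSyncMat}), and then apply the Chernoff--Hellinger exact-recovery machinery for the general (non-symmetric) SBM. The only difference is that the paper obtains \eqref{UB_nonSync_MLFailureError} by directly citing equations (44) and (47) of \citep{abbecommunity} after adjusting notation, whereas your steps (3)--(4) sketch the underlying Chernoff/union-bound derivation of that cited result.
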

% Proof Sketch
% \begin{proof}
The modeling of the coarse graph under the CO-$\COvar$ constraint, i.e. binarization and profile mapping to extended communities sketched before the theorem, makes the binarized coarse graph fit to the general SBM framework in \citep{abbecommunity}. In general SBM, the connection probability between members of the extended communities is no longer symmetric. Rather, this probability differs for each pair of extended communities. This way, the error bound is straightforwardly derived using equations (44) and (47) in \citep{abbecommunity}, while adjusting the notations. 
The rest of the detailed proof techniques for Theorem~\ref{thm:nonsync_UB_error} is elaborated in Sec.~\ref{subsec:nonSync_proof_techniques}.
% \end{proof}

% interpretation 
Theorem~\ref{thm:nonsync_UB_error} demonstrates that, as the connectivity probability among pairs of the extended communities become distant, the recovery error bound improves. 
\begin{remark}\label{remark:CH_dominant_term}
In order to extract interpretable observations from the recovery error bound in Theorem~\ref{thm:nonsync_UB_error}, we examine the dominant term of the CH divergence in \eqref{scaled_CH_divergence_def}.
For each pair of extended communities, $k,k'$, the dominant term corresponds to an extended community $k''$, where the probability of its connectivity to those communities is the most distant.
% , i.e.
% \begin{align}
% \begin{array}{ll}
%          \scaledCHdiv(\textrm{diag}(\mathbf{s})\genComConnect_{k},\textrm{diag}(\mathbf{s})\genComConnect_{k'}) 
%          \displaystyle\simeq^{\textrm{dominant term}}
%          & \\
%          \quad \displaystyle\max_{0\leq t \leq 1} \displaystyle\max_{k''\in[\nonSyncDimMatCom{\COvar}]}
%          \mathbf{s}_{k''}[
%         t  \genComConnect_{kk''}+ (1-t) \genComConnect_{k'k''} - \genComConnect_{kk''}^t \genComConnect_{k'k''}^{(1-t)} 
%          ]. & 
%     \end{array}
% \end{align}
We derived an estimate for the dominant term in Sec.~\ref{appendix:proof_remark_CH_dominant_term} of the supplementary materials,
% . Using the derived term in \eqref{dominant_term}, we expect 
which demonstrates the following: the exponent of the error recovery bound (i.e. the CH divergence) increases as $\protoCovSize$ and $|\constintraComProb-\constextraComProb|$ increase (by fixing whichever $\constintraComProb$ or $\constextraComProb$ that is smaller and increasing the other one), or as $\nu$ decreases, while other parameters remain unchanged. 
\end{remark}

In the following we list the observations derived from Theorem~\ref{thm:nonsync_UB_error} and Remark~\ref{remark:CH_dominant_term}:
%The observations are based on the fact that the error upper bound reduces as $\scaledCHdiv$ in Thoerem \ref{thm:nonsync_UB_error} (i.e. the exponent) increases. For $\scaledCHdiv$ to increase, $\genComConnect_{kk''}$ and $\genComConnect_{k'k''}$ should become distant, or more formally $\mathbf{s}_{k''}|\genComConnect_{kk''}-\genComConnect_{k'k''}|$ should increase.
% We study the parameters' effects in two scenarios: when the prior $\mathbf{s}$ remains unchanged, and when $\mathbf{s}$ changes.

1. As we increase the measurement size (i.e. $\dimMatC$, {the number of c-nodes}), the error bound decreases. 

2. As the coverage size per measurement (i.e. $\protoCovSize$, {the number of measured fine nodes represented by a c-node}) expands, the failure error bound decreases. 

3. By allowing measurements overlapping with fewer communities {(i.e. increasing the purity of the c-nodes)}, the error bound drops. This intuitively makes sense due to a decrease in complexity.

4. The expansion of the gap between extra- and intra-community probabilities results in a decrease in the error bound. This is intuitively expected since communities become more distinguishable from one another.

Note that the trends listed above are true so long as the prior $\mathbf{s}$ remains unchanged, or does not change such behaviors. We also assume other parameters except for the one mentioned, remain unchanged. Otherwise, we face perturbing multiple parameters simultaneously, which might make the behavior of the error bound unpredictable and heavily depending on the parameter values.

The following corollary characterizes the asymptotic conditions such that the community recovery error, upper-bounded in Theorem~\ref{thm:nonsync_UB_error}, approaches zero.
\begin{corollary}\label{corollary:nonsync_cond}
Let $\matF\sim \textrm{SSBM}(\dimMatF,\dimMatCom,p,q)$ from which $\matC$ in \eqref{linear_coarsening_model} is measured under the $\protoCovSize$-homogeneous, balanced, and CO-$\COvar$ constraints. 
The probability that the $\text{MAP}$ estimator fails to recover the c-node profile matrix $\genSyncMat$ from $\matC$ (up to relabelling of $\genSyncMat$'s columns), for a constant $\Delta>0$, $0<\thresholdMatCin<\frac{1}{\COvar}$, tends to zero as:
%The exact recovery of $\genSyncMat$ under the balanced and CO-$\COvar$ constraint is asymptotically achieved , and for:
\begin{align}\label{UB_nonSync_mainCond}
\begin{array}{c}
\protoCovSize>\frac{\Delta}{\sqrt{\scaleFineComProb}} , \quad
\constintraComProb\neq\constextraComProb, 
\\ 
\Delta^2(\frac{\dimMatC}{\dimMatF})^2<\scaleFineComProb\leq \upConstScaleFineComProb, \quad\dimMatF\geq \dimMatF_0,\quad\dimMatC, \dimMatF\rightarrow\infty
\end{array}.
\end{align}
The constant $\Delta$ is defined in equation \eqref{def_constants2} in Sec.~\ref{appendix:proof_corollary_nonSyncCond} of the supplementary materials. The remaining parameters are assumed to remain fixed. 
\end{corollary}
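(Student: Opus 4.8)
The plan is to start from the non-asymptotic bound of Theorem~\ref{thm:nonsync_UB_error} and reduce the vanishing of the failure probability to a single scalar divergence condition. Since $\dimMatCom$ and $\COvar$ are held fixed, the number of extended communities $\nonSyncDimMatCom{\COvar}$ in \eqref{size_extended_coms} is a constant, so the sum in \eqref{UB_nonSync_MLFailureError} contains only the constant number $\binom{\nonSyncDimMatCom{\COvar}}{2}$ of exponential terms. Consequently the bound tends to zero if and only if every exponent diverges, i.e. it suffices to prove
\begin{align}
\dimMatC\,\min_{k<k'}\scaledCHdiv(\textrm{diag}(\mathbf{s})\genComConnect_{k},\textrm{diag}(\mathbf{s})\genComConnect_{k'})\xrightarrow[\dimMatC\to\infty]{}\infty .
\end{align}
The remaining work is therefore a uniform lower bound on the scaled CH divergence across all pairs of extended communities.

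Second, I would establish feasibility of the coverage condition. By Def.~\ref{def_homogenous} any admissible coverage size obeys $\protoCovSize\leq\dimMatF/\dimMatC$, so the requirement $\protoCovSize>\Delta/\sqrt{\scaleFineComProb}$ can be met only when $\Delta/\sqrt{\scaleFineComProb}<\dimMatF/\dimMatC$, which rearranges to exactly the stated inequality $\Delta^2(\dimMatC/\dimMatF)^2<\scaleFineComProb$. I would emphasize that this is why that condition appears: it guarantees an admissible $\protoCovSize$ with enough coverage actually exists. The upper bound $\scaleFineComProb\leq\upConstScaleFineComProb$ together with $\dimMatF\geq\dimMatF_0$ keeps $\intraComProb=\constintraComProb\scaleFineComProb$ and $\extraComProb=\constextraComProb\scaleFineComProb$ valid probabilities and validates the asymptotic approximations used below.

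Third --- the heart of the argument --- I would lower bound $\scaledCHdiv$ using the dominant-term estimate of Remark~\ref{remark:CH_dominant_term} (derived in Sec.~\ref{appendix:proof_remark_CH_dominant_term}). The entries $\genComConnect_{kk''}$ are Poisson-Binomial tail probabilities $\mathbb{P}(X\geq\protoCovSize^2(\thresholdMatCin\intraComProb+(1-\thresholdMatCin)\extraComProb))$ whose mean-to-threshold gap equals $(\intraComProb-\extraComProb)(\mathbf{a}^\intercal\mathbf{a}'-\protoCovSize^2\thresholdMatCin)$ and whose standard deviation is of order $\protoCovSize\sqrt{\scaleFineComProb}$ in the sparse regime. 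Normalizing the gap by the standard deviation yields a standardized separation of order $\protoCovSize\sqrt{\scaleFineComProb}\,|\constintraComProb-\constextraComProb|$, so $\constintraComProb\neq\constextraComProb$ ensures that the sign of the separation differs between profiles that do and do not overlap, while $\protoCovSize>\Delta/\sqrt{\scaleFineComProb}$ (equivalently $\protoCovSize\sqrt{\scaleFineComProb}>\Delta$) forces this separation to be bounded below by a positive constant; the range $0<\thresholdMatCin<1/\COvar$ is what places the binarization threshold strictly between the overlapping and non-overlapping means for every pair of balanced profiles. Feeding these separated probabilities into the dominant term of \eqref{scaled_CH_divergence_def} produces a uniform bound $\min_{k<k'}\scaledCHdiv\geq c_0>0$ (or, in the logarithmic-density regime, $\min_{k<k'}(\dimMatC/\log\dimMatC)\scaledCHdiv\geq c_0$, so that $\dimMatC\scaledCHdiv\geq c_0\log\dimMatC$); in either case $\dimMatC\scaledCHdiv\to\infty$, and combining with the constant term count gives $\mathbb{P}(\textrm{MAP failure})\to 0$.

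The main obstacle I anticipate is the uniform control of the Poisson-Binomial tails and, through them, of the divergence. These tails admit no closed form, so the separation argument must proceed through a Gaussian/Chernoff approximation whose error has to be shown negligible uniformly in $\dimMatF$ for $\dimMatF\geq\dimMatF_0$, and the variance estimate of order $\protoCovSize\sqrt{\scaleFineComProb}$ must be justified precisely in the sparse regime $\scaleFineComProb\to 0$ where $\intraComProb,\extraComProb$ vanish simultaneously. A secondary difficulty is that \eqref{scaled_CH_divergence_def} is a maximization over $t$ of a sum over all $\nonSyncDimMatCom{\COvar}$ extended communities; I would avoid solving this exactly by evaluating at a convenient $t$ (e.g. $t=\tfrac12$) and retaining only the dominant $k''$ identified in Remark~\ref{remark:CH_dominant_term}, which lower bounds the maximum and suffices to drive the exponent to infinity. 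Pinning down the explicit constant $\Delta$ of \eqref{def_constants2} so that all of these inequalities hold simultaneously is the final bookkeeping step.
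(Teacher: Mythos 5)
Your proposal is correct and follows essentially the same route as the paper's own proof: reduce the finite sum in Theorem~\ref{thm:nonsync_UB_error} to a uniform positive lower bound on $\scaledCHdiv$, obtain that bound by evaluating at $t=\tfrac12$ and keeping the dominant extended community, control the Poisson-Binomial tails by a Gaussian (Berry--Esseen) approximation with error $O\bigl(1/(\protoCovSize\sqrt{\scaleFineComProb})\bigr)$, use $\constintraComProb\neq\constextraComProb$ and $0<\thresholdMatCin<\tfrac{1}{\COvar}$ to separate the standardized means, and conclude $\protoCovSize\sqrt{\scaleFineComProb}>\Delta$, with $\scaleFineComProb>\Delta^2(\dimMatC/\dimMatF)^2$ arising exactly as you say, as the feasibility condition forced by $\protoCovSize\leq\dimMatF/\dimMatC$. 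The only part you defer --- extracting the explicit $\Delta$ --- is precisely the paper's remaining bookkeeping (erf and logarithm inequalities reducing the separation requirement to a cubic in $\protoCovSize\sqrt{\scaleFineComProb}$ whose unique real root defines $\Delta$ in \eqref{def_constants2}).
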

% Proof sketch
% \begin{proof}
The condition in \eqref{UB_nonSync_mainCond} is directly derived from the error bound in Theorem~\ref{thm:nonsync_UB_error}, by tending the exponent to infinity resulting in the error to approach zero. The complete proof is sketched in Sec.~\ref{subsec:nonSync_proof_techniques}.
% \end{proof}

% interpretation
Corollary~\ref{corollary:nonsync_cond} characterizes the impact of coarsening on the community recovery. After the coarse graph is binarized, the connectivity probability between some c-edges reaches very fast to zero, and the rest to one, which facilitates the separation of communities. Moreover,
% Corollary~\ref{corollary:nonsync_cond} brings a crucial point to attention:
the measurement coverage size $\protoCovSize$ {(i.e. the number of measured fine nodes combined into a c-node)}, and the graph binarization threshold $\thresholdMatCin$, must satisfy a lower and upper bound, respectively, to allow perfect community recovery of a coarsened graph through its binarization. The recovery conditions derived in Corollary~\ref{corollary:nonsync_cond}, are illustrated in the last column of  Table~\ref{table:compare_recovery_nonsync}, and are compared with those of the classic (non-coarsened) general SBM that exist in the literature. The comparison is made in terms of various scalings of the parameters. The first column exhaustively partitions the scaling of the connection probability of the coarse graph, which can be a function of $\dimMatC,\dimMatF,\protoCovSize$ and denoted by $\scaleCoarseComProb$, for which the second column shows state-of-the-art conditions to allow or disallow exact recovery. In the third column, different scalings of the coarsening coverage size $\protoCovSize$ are considered, where each scaling results in separate recovery conditions demonstrated in the last column.  

\begin{table*}[!htp]
\small
\caption{\small Comparison of the recovery conditions under the CO-$\COvar$ constraint, derived from Corollary~\ref{corollary:nonsync_cond}. All scaling notations (${o}$ for strictly smaller than, and $\strictGreaterBigO$ for strictly greater than, disregarding constants) are defined with respect to $\dimMatC$. 
%$\dimMatF$ is the fine graph size, $\dimMatC$ denotes the number of measurements (coarse graph size), $\dimMatCom$ is the number of communities, $\protoCovSize$ is the number of fine nodes covered by each measurement, 
$\scaleFineComProb$ is the probability scaling of connections in the fine graph,
%, where $\dimMatC\leq \frac{\dimMatF}{\protoCovSize}$. 
$Q$ is a constant matrix (i.e. it does not scale with other variables), $\Delta$ is a positive constant,
% defined in \eqref{def_constants2}. We define 
and $\scaleCoarseComProb$ represents the probability scaling of connections in the coarse graph, which is a function of the fine and coarse graphs sizes, and the coverage size {(i.e. the number of measured fine nodes represented by a c-node)}, to allow for comparison with the classic scenario (i.e. with $\dimMatF=\dimMatC, \protoCovSize=1$).
%Note that, in classic SBM, $\tilde{f}$ and $\tilde{g}$ are assumed to be equal. However, in this work we have: $\tilde{f}(\dimMatF)\simeq\Psi(c_1\protoCovSize\sqrt{\scaleFineComProb})-\frac{c_2}{\protoCovSize\sqrt{\scaleFineComProb}}$ and $\tilde{g}(\dimMatF)\simeq \Psi(-c_3\protoCovSize\sqrt{\scaleFineComProb})-\frac{c_4}{\protoCovSize\sqrt{\scaleFineComProb}}$. 
} \label{table:compare_recovery_nonsync}
\begin{center}
\begin{tabular}{lllll}
$\scaleCoarseComProb$:     &\textbf{Classic (exact) Recovery} & Scaling of  & \textbf{Recovery, This paper} & 
%Recovery, this paper,  
\\ probability scaling   & $\textrm{SBM}(\dimMatC, \mathbf{s}, \genComConnect=Q\scaleCoarseComProbAt{\dimMatC}{1})$  &  coarsening & 
%$\textrm{SBM}(\dimMatC, \mathbf{s}, \genComConnect)$ , where 
as $\dimMatC,\dimMatF\rightarrow\infty$
& 
%maximal measurements
\\
 of connections & as $\dimMatC,\dimMatF\rightarrow\infty$ & coverage size, & 
 % $\qquad$ and $ \scaleFineComProb>\Delta^2(\frac{\dimMatC}{\dimMatF})^2$ 
\\ in coarse graph  &   \citep{abbe2015community} & i.e.  $\protoCovSize$  &
%$\genComConnect=Q^{(0)}\tilde{f}(\dimMatF)+Q^{(1)}\tilde{g}(\dimMatF)$ 
&  
%i.e. $\dimMatF=\protoCovSize\dimMatC$
\\
\hline \\
${o}(\frac{\log\dimMatC}{\dimMatC})$ &Impossible & ${o}(\frac{1}{\sqrt{\scaleFineComProb}})$ & Impossible & \\
 & & $c_1\frac{1}{\sqrt{\scaleFineComProb}}$ & Possible if $\constintraComProb\neq\constextraComProb, c_1>\Delta, \scaleFineComProb>\Delta^2(\frac{\dimMatC}{\dimMatF})^2 $& \\
& & $\strictGreaterBigO(\frac{1}{\sqrt{\scaleFineComProb}})$ & Possible if $\constintraComProb\neq\constextraComProb, \scaleFineComProb=\strictGreaterBigO((\frac{\dimMatC}{\dimMatF})^2)$& \\
&&&&\\
$\genConstDegScale\frac{\log\dimMatC}{\dimMatC}$    & Possible if $\CHdiv>1$ & ${o}(\sqrt{\frac{\dimMatC}{\log\dimMatC}})$ & Impossible & \\
& & $c_1\sqrt{\frac{\dimMatC}{\genConstDegScale\log\dimMatC}}$ & Possible if $\constintraComProb\neq\constextraComProb, c_1>\Delta,  \scaleFineComProb>\Delta^2(\frac{\dimMatC}{\dimMatF})^2 $& \\
&  & ${\strictGreaterBigO}(\sqrt{\frac{\dimMatC}{\log\dimMatC}})$ & Possible if $\constintraComProb\neq\constextraComProb, \scaleFineComProb=\strictGreaterBigO((\frac{\dimMatC}{\dimMatF})^2)$& \\
&&&&\\
${\strictGreaterBigO}(\frac{\log\dimMatC}{\dimMatC})$  & Possible if $\CHdiv>0$ & ${o}(\frac{1}{\sqrt{\scaleFineComProb}})$ & Impossible & \\
 &  & $c_1\frac{1}{\sqrt{\scaleFineComProb}}$ & Possible if $\constintraComProb\neq\constextraComProb, c_1>\Delta,  \scaleFineComProb>\Delta^2(\frac{\dimMatC}{\dimMatF})^2 $ & \\
  &  & $\strictGreaterBigO(\frac{1}{\sqrt{\scaleFineComProb}})$ & Possible if $\constintraComProb\neq\constextraComProb, \scaleFineComProb=\strictGreaterBigO((\frac{\dimMatC}{\dimMatF})^2)$ & \\
\end{tabular}
\end{center}
\end{table*}

% Note that the condition of $\constintraComProb\neq\constextraComProb$ in both Corollaries~\ref{corollary:nonsync_cond} and \ref{corollary:nonsync_maximal_cond}, makes the recovery very flexible with respect to the properties of the original fine graph, allowing both assortative ($\constintraComProb>\constextraComProb$) and disassortative ($\constintraComProb<\constextraComProb$) community structures.

\subsection{Proof Techniques}
\label{subsec:nonSync_proof_techniques}
The community recovery problem under the CO-$\COvar$ constraint refers to the problem of estimating the c-node profile matrix $\genSyncMat$ that corresponds to the weighted adjacency matrix $\matC$ defined in  \eqref{linear_coarsening_model} and measured from $\matF\sim \textrm{SSBM}(\dimMatF,\dimMatCom,p,q)$ under the $\protoCovSize$-homogeneous, balanced, and CO-$\COvar$ constraints. 
This way, $\matC$ is distributed according to \eqref{homogeneous_general_dist_matC} and hence, can be thought of and modeled as a sample of a \emph{weighted} version of the Overlapping general SBM (OSBM) random graph ensemble. The formal definition of general SBM is found in \citep{abbe2015community}. We define the weighted OSBM that models $\matC$, similar to the classic OSBM, except that the node profiles $\genSyncVec_i$ for all $i$ belong to the set $\profileSet$ defined as \eqref{profile_set}.
rather than the set of any length-$\dimMatCom$ binary vectors $\lbrace 0,1\rbrace^{\dimMatCom}$. Furthermore, the weighted OSBM that models $\matC$, an edge between pairs of nodes is distributed as the Poisson Binomial distribution in \eqref{homogeneous_general_dist_matC}, rather than the Bernoulli distribution in classic OSBM.
Note that due to the Community Overlap (i.e. CO-$\COvar$) assumption on $\matC$, the edge distributions depend on the inner product of the pairwise profiles, which takes values between $0$ and $\protoCovSize^2$, i.e. $0\leq \genSyncVec_i^\intercal\genSyncVec_j\leq \protoCovSize^2$. Hence, the weighted OSBM is not symmetric.

% ----------------------------------------------------------------
% As elaborated before, we model the recovery of the c-node profile under the CO-$\COvar$ constraint as a special weighted variant of OSBM, whose distribution of edge weights among nodes within and across communities depend on the extent of Overlap, i.e. the inner product between their profile vectors. 
Deriving the conditions that allow the community recovery from a weighted OSBM, except for the symmetric case (c.f. Sec.~\ref{subserc:fullSync_mainBody} and Sec.~\ref{appendix:perfect_sync} in the supplementary materials), is an open problem \citep{xu2020optimal}. In the following, we exploit the properties of the special case of the weighted OSBM concerning this study, which enables its transformation to a classic (unweighted) general SBM. 
We propose a two-stage strategy that first binarizes $\matC$ and then represents the resultant unweighted OSBM as an unweighted classic (non-overlapping) general SBM. The binarization is motivated for two reasons. First, binarization is widely used to simplify and sparsify weighted graphs. Second, through binarization, we can leverage existing work in community detection literature to study the conditions to recover the c-node profile matrix. 

\subsubsection{Stage one: Binarizing $\matC$}\label{subsec:stageOne_binarization}
The summation in the coarsening model \eqref{linear_coarsening_model} suggests the concentration of edge weights around a mean value. Hence, for the c-edges that corresponds to a pair of c-nodes measuring from only one community, the expectation of the weights tend to concentrate about means $\intraComProb\protoCovSize^2$ or $\extraComProb\protoCovSize^2$. Regarding the c-nodes measuring from multiple communities, their corresponding c-edge weights concentrate about means $\intraComProb \genSyncVec_i^\intercal\genSyncVec_j+ \extraComProb(\protoCovSize^2-\genSyncVec_i^\intercal\genSyncVec_j)$. This motivates solving our weighted OSBM problem by first binarizing $\matC$. Such binarization facilitates community recovery by adopting the much more evolved tools available for unweighted graphs. 
We define the binarized coarse measured matrix $\biMatC$ as:
\begin{align}\label{matC_binarized}
    \biMatC_{ij} \triangleq \left\lbrace\begin{array}{ll}
        1 &  \textrm{ if } \matC_{ij} \geq \protoCovSize^2(\thresholdMatCin \intraComProb + (1-\thresholdMatCin)\extraComProb)\\
        0 & \textrm{ else}
    \end{array}\right.,
\end{align}
for $0 \leq \thresholdMatCin \leq 1$. 
The chosen threshold, i.e. $\protoCovSize^2(\thresholdMatCin \intraComProb + (1-\thresholdMatCin)\extraComProb)$ in \eqref{matC_binarized}, is a suitable choice since it is lower- and upper- bounded by $\extraComProb\protoCovSize^2, \intraComProb\protoCovSize^2$, the minimum and maximum mean values of $\matC_{ij}$ for various profile inner products. This way, we only keep the most significant edges, i.e. those whose weights are above the mean value of the intra-community connections. 
\subsubsection{Stage two: SBM representation of the OSBM}
Through the binarization explained in Sec.~\ref{subsec:stageOne_binarization}, the coarse graph $\matC$ previously modeled as a weighted general OSBM, is converted to $\biMatC$, which is a classic (unweighted) general OSBM.
Following the approach suggested in \citep{abbe2017community}, we convert the classic OSBM to an equivalent \textit{non-overlapping} general SBM. To do so, instead of the original community set $[\dimMatCom]$, we use the extended community set $[\nonSyncDimMatCom{\COvar}]$ , where $\nonSyncDimMatCom{\COvar} \triangleq |\profileSet|$ defined in \eqref{size_extended_coms}, where each extended community represents a possible c-node profile $\genSyncVec_i\in\profileSet$ for all $i\in[\dimMatC]$. 
The one-to-one function $h:\profileSet \xrightarrow[]{}[\nonSyncDimMatCom{\COvar}]$ provides indexing for the extended communities, i.e. a profile vector $\genSyncVec_i\in\profileSet$ maps to an extended community $k = h(\genSyncVec_i)$.
Such conversion of profiles to extended communities, models the binarized matrix of measurements $\biMatC$ in \eqref{matC_binarized} as a general unweighted SBM denoted by $\biMatC \sim \textrm{SBM}(\dimMatC, \mathbf{s}, \genComConnect)$, where $\mathbf{s}$ is a prior probability vector of the extended communities. 
Sec.~\ref{appendix:more_nonSync_proof_techniques} in the supplementary materials provides the formal definition of the general unweighted SBM, the derivation of the matrix of community connectivity probabilities $\genComConnect$, and the remaining of the proof techniques of Theorem~\ref{thm:nonsync_UB_error} and Corollary~\ref{corollary:nonsync_cond}.

%% --------------------------------------------------------------

\subsection{Stronger recovery under the special Community Overlap (CO)-$1$ constraint}\label{subserc:fullSync_mainBody}
The results in Sec.~\ref{subsec:mainResultsgeneral_sync} are applicable to coarse measured graphs under the general CO-$\nu$ constraint, for all $1\leq \nu\leq \dimMatCom$. However, the CO-$1$ constraint is an special case, which corresponds to a weighted and \textit{symmetric} SBM. 
%As briefly pointed out at the beginning of Sec.~\ref{subsec:nonSync_proof_techniques}, the WSSBM case is not an open problem in the literature. Hence, we have derived stronger recovery conditions for this case in Sec.~\ref{appendix:perfect_sync} of the supplementary materials, using the work in \citep{jog2015information}.
Contrary to the general (i.e. non-symmetric) weighted SBM model which is an open problem, the community recovery from such weighted and symmetric SBM has already been addressed in the literature \citep{jog2015information,xu2020optimal}. In the following theorem, we adopt the results of \citep{jog2015information} to achieve stronger recovery conditions under the CO-$1$ constraint, compared with those of the general CO-$\COvar$ scenario in Corollary~\ref{corollary:nonsync_cond}.
\begin{theorem}\label{thm:fullsync_UB_error_cond}
Let $\matF\sim \textrm{SSBM}(\dimMatF,\dimMatCom,\intraComProb,\extraComProb)$ from which $\matC$ in \eqref{linear_coarsening_model} is measured under the $\protoCovSize$-homogeneous, and CO-$1$ constraints. The probability that the $\text{MAP}$ estimator fails to recover the 
%c-node profile vector $\fullComAs\in [\dimMatCom]^{\dimMatC}$ 
c-node profile matrix $\genSyncMat$ from $\matC$ (up to relabelling of $\genSyncMat$'s columns)
from $\matC$, tends to zero as:
\begin{align}\label{condition_exact_rec_regime} 
    \left\lbrace\begin{array}{ll}
    \constintraComProb\neq\constextraComProb &\mathrm{if} \quad \dimMatC<\infty \quad \protoCovSize\sqrt{\scaleFineComProb}\rightarrow\infty 
    \\
  \frac{\constintraComProb+\constextraComProb}{2}-\sqrt{\constintraComProb\constextraComProb}>\displaystyle\lim_{\dimMatC\rightarrow\infty}&\left[\frac{\dimMatCom}{2}\frac{\log\dimMatC}{\protoCovSize^2\scaleFineComProb\dimMatC}\right] \quad\mathrm{if} \quad \dimMatC,\dimMatF\rightarrow\infty,
    \end{array}\right.
\end{align}
if $\scaleFineComProb\xrightarrow[]{\dimMatF\rightarrow\infty} 0$. $\dimMatCom$ is assumed to remain fixed.
\end{theorem}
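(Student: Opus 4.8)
The plan is to exploit the symmetry that the CO-$1$ constraint forces on the coarse graph, reducing the problem to exact recovery in a \emph{symmetric} weighted SBM, and then to invoke the R\'enyi-divergence threshold of \citep{jog2015information}. First I would specialize Lemma~\ref{lemma:coarse_mat_stat}: under CO-$1$ every profile row $\genSyncVec_i$ has support exactly one, with value $\protoCovSize$ by homogeneity, so the inner product $\genSyncVec_i^\intercal\genSyncVec_j$ equals $\protoCovSize^2$ when c-nodes $i,j$ lie in the same community and $0$ otherwise. Equation~\eqref{homogeneous_general_dist_matC} then collapses the Poisson-Binomial to a binomial, giving $\matC_{ij}\sim\textrm{Binomial}(\protoCovSize^2,\intraComProb)$ for intra-community pairs and $\matC_{ij}\sim\textrm{Binomial}(\protoCovSize^2,\extraComProb)$ for inter-community pairs. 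Since these two laws depend only on whether the endpoints share a community, $\matC$ is a symmetric weighted SBM on $\dimMatC$ c-nodes with $\dimMatCom$ blocks (taken balanced, as in the underlying SSBM), to which \citep{jog2015information} applies directly.

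Next I would identify the governing divergence. The optimal (MAP) misclassification exponent in a symmetric weighted SBM is controlled by the Bhattacharyya coefficient $\textrm{BC}(f_{\textrm{in}},f_{\textrm{out}})$ between the within- and between-block edge laws. Because a $\textrm{Binomial}(\protoCovSize^2,\cdot)$ factorizes over its $\protoCovSize^2$ trials, the binomial theorem gives the closed form
$$\textrm{BC}(f_{\textrm{in}},f_{\textrm{out}}) = \left(\sqrt{\intraComProb\extraComProb}+\sqrt{(1-\intraComProb)(1-\extraComProb)}\right)^{\protoCovSize^2}.$$
The dominant error event relabels a single c-node into a neighbouring block, which touches $2\dimMatC/\dimMatCom$ c-edges (edges to its own block become inter-type, edges to the target block become intra-type); summing the per-edge exponent, which by symmetry is attained at $t=\tfrac12$, and union-bounding over the $\dimMatC$ c-nodes yields the exact-recovery threshold $2(\dimMatC/\dimMatCom)\bigl(-\log\textrm{BC}\bigr)>\log\dimMatC$. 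I would sanity-check this normalization against the classical two-block threshold $\tfrac{\constintraComProb+\constextraComProb}{2}-\sqrt{\constintraComProb\constextraComProb}>1$.

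The two regimes then fall out of an asymptotic expansion of $-\log\textrm{BC}$. Writing $\intraComProb=\constintraComProb\scaleFineComProb$, $\extraComProb=\constextraComProb\scaleFineComProb$ and letting $\scaleFineComProb\to 0$, a Taylor expansion of $\sqrt{(1-\intraComProb)(1-\extraComProb)}$ together with the identity $\tfrac{\intraComProb+\extraComProb}{2}-\sqrt{\intraComProb\extraComProb}=\tfrac12(\sqrt{\intraComProb}-\sqrt{\extraComProb})^2$ gives $-\log\textrm{BC}=\protoCovSize^2\scaleFineComProb\bigl(\tfrac{\constintraComProb+\constextraComProb}{2}-\sqrt{\constintraComProb\constextraComProb}\bigr)(1+O(\scaleFineComProb))$. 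For $\dimMatC,\dimMatF\to\infty$, substituting into the threshold and rearranging produces exactly $\tfrac{\constintraComProb+\constextraComProb}{2}-\sqrt{\constintraComProb\constextraComProb}>\tfrac{\dimMatCom}{2}\tfrac{\log\dimMatC}{\protoCovSize^2\scaleFineComProb\dimMatC}$, the second line of \eqref{condition_exact_rec_regime}. For $\dimMatC<\infty$ the union bound is over finitely many c-nodes, so exact recovery needs only each pairwise edge test to succeed with probability tending to one, i.e. $\textrm{BC}\to 0$; since $-\log\textrm{BC}\asymp\protoCovSize^2\scaleFineComProb\cdot\tfrac12(\sqrt{\constintraComProb}-\sqrt{\constextraComProb})^2$, this occurs precisely when $\constintraComProb\neq\constextraComProb$ and $\protoCovSize\sqrt{\scaleFineComProb}\to\infty$, the first line.

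The main obstacle I anticipate is twofold. First, matching the normalization constant in the \citep{jog2015information} threshold, i.e. pinning down the factor that yields the exact $\tfrac{\dimMatCom}{2}$ and confirming that the originally two-block result extends to $\dimMatCom$ balanced blocks (with $\dimMatCom$ fixed) so that the single worst confusable pair governs the exponent. Second, controlling the $O(\scaleFineComProb)$ remainder in the Bhattacharyya expansion so that it provably vanishes and does not perturb the strict inequality in the $\dimMatC\to\infty$ limit; care is also needed to verify that the c-node community sizes concentrate enough for the balanced-WSBM hypotheses to hold.
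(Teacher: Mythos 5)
Your proposal is correct and takes essentially the same route as the paper: specialize to $\textrm{Binomial}(\protoCovSize^2,\intraComProb)$ versus $\textrm{Binomial}(\protoCovSize^2,\extraComProb)$ edge laws under CO-$1$, invoke the weighted-SBM threshold of \citep{jog2015information} (your Bhattacharyya threshold $2(\dimMatC/\dimMatCom)(-\log\textrm{BC})>\log\dimMatC$ is identical to the paper's $\lim \dimMatC I/(\dimMatCom\log\dimMatC)>1$, since $I=-2\log\textrm{BC}$), and Taylor-expand the divergence as $\scaleFineComProb\to 0$ to extract the two regimes. The paper likewise splits into the finite-$\dimMatC$ case (requiring $I\to\infty$, i.e. $\constintraComProb\neq\constextraComProb$ and $\protoCovSize\sqrt{\scaleFineComProb}\to\infty$) and the $\dimMatC\to\infty$ case, so your case analysis and resulting conditions match its proof.
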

% Sketch
More explanations and proof details are found in Sec.~\ref{appendix:perfect_sync} and Sec.~\ref{appendix:proof_theorm_fullSyncUBCond} of the supplementary materials.
% interpretation
% Theorem~\ref{thm:fullsync_UB_error_cond}, similar to Corollary~\ref{corollary:nonsync_cond} in Sec.~\ref{sec:general_sync}, highlights the impact of coarsening on the recovery conditions through the role of coverage size $\protoCovSize$. It provides a generalization to the classic community recovery, where $\protoCovSize=1$. Due to coarsening. the connectivity probabilities between intra- and extra-communities become more distinguishable from one another, and so do the communities.  
% The recovery conditions in Theorem~\ref{thm:fullsync_UB_error_cond} are illustrated in Table~\ref{table:compare_recovery_fullsync} in the supplementary materials.

% % -------------------------------------------------------------

\section{Numerical Results}\label{sec:numerical_results}
In this section, we evaluate the error behavior of the community recovery from synthetically generated coarse measured graphs. We compare the theoretical error bounds derived in Sec.~\ref{sec:general_sync}, with state-of-the-art community detection methods from existing works that are applied to the generated coarse graphs. {\footnote{ 
The Python code to reproduce the results of this paper is available at:  \url{https://github.com/NaGho/Community-Detection-From-Coarse-Measured-Graphs}.
}}
{
It should be noted these algorithmic methods only output the index of the nodes estimated to be assigned. This translates into the recovery of a binarized version of the community assignment matrix $\syncComAs$.
}
Refer to Sec.~\ref{appendix:simulations_methodology} in supplementary materials for the detailed methodology used in this section. 

% \textbf{Methodology:}

% \textbf{Results:}
In Fig.~\ref{fig.numerical_CommunityRecoveryEval}, the theoretical error bound (solid line), as well as the community recovery error for multiple state-of-the-art overlapping community detection methods \citep{rossetti2019cdlib} are plotted\footnote{{
The results in this section are computed assuming  $\intraComProb,\extraComProb,\dimMatCom,\protoCovSize$ are known. However, using model selection methods, heuristics can be developed to estimate these parameters when they are not known apriori.
}}. The methods include Modularized non-negative matrix factorization (M-NMF) \citep{wang2017community}, Speaker-listener Label Propagation Algorithm (SLPA) \citep{xie2011slpa,xie2013overlapping}, Non-Negative Symmetric Encoder-Decoder (NNSED) \citep{sun2017non}, and Cluster Affiliation Model for Big Networks (BigClam) \citep{yang2013overlapping}
(dashed lines). Note that we have evaluated these methods for various hyper-parameters and plotted their best performance.

\begin{figure}[h]
\centering
\subfloat[w.r.t. $\dimMatC$ for $\protoCovSize=50$.]{\includegraphics[width=21em]{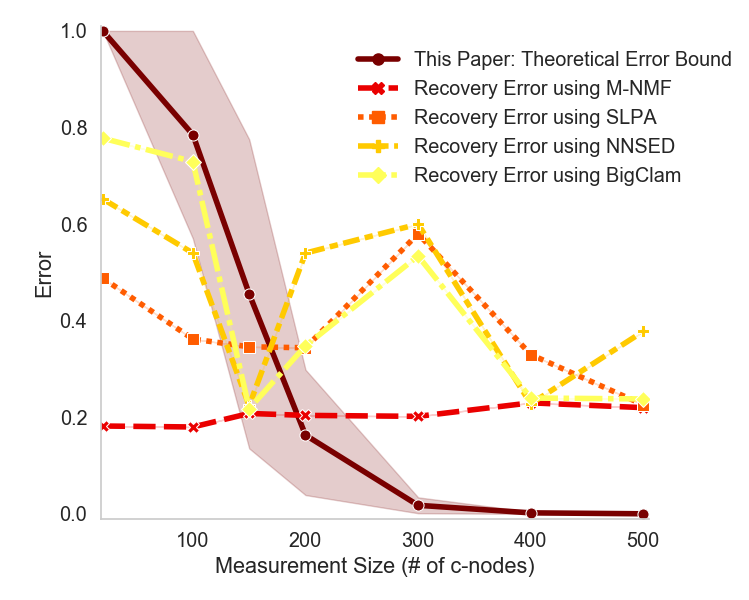}
\label{fig.CommunityRecoveryEval_wrt_measurementSize}}
\hfil
\subfloat[w.r.t. $\protoCovSize$  for $\dimMatC=400$.]{\includegraphics[width=20em]{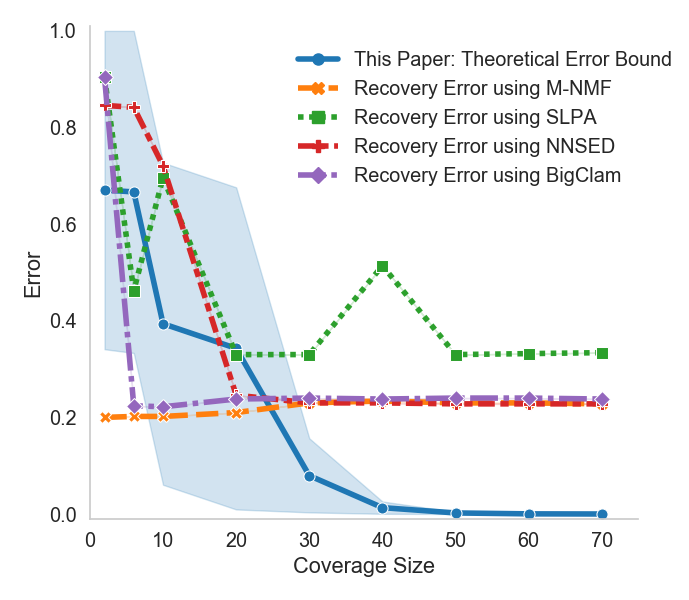}%
\label{fig.CommunityRecoveryEval_wrt_coverageSize}}
\caption{\small Community recovery error for $\dimMatF=30000$ fine nodes, $\COvar=2$ community overlap (CO), $\dimMatCom=5$ communities, and $\constintraComProb=500,\constextraComProb=50$ intra- and extra- community constants for the probability of connectivity.}
\label{fig.numerical_CommunityRecoveryEval}
\end{figure}

% The error behavior is evaluated for the coarse graph size of $\dimMatC$ in
From Fig.~\ref{fig.CommunityRecoveryEval_wrt_measurementSize}, we observe that as we increase the measurement size (i.e. $\dimMatC$, {the number of c-nodes}), the theoretical error bound drops monotonically. 
Similarly, Fig.~\ref{fig.CommunityRecoveryEval_wrt_coverageSize} plots the community recovery error with respect to the coverage size $\protoCovSize$ {(i.e. the number of measured fine nodes combined into a c-node)}, demonstrating that increasing the coverage size monotonically improves the theoretical community recovery error. 
These observations confirm the expectations made subsequent to Theorem~\ref{thm:nonsync_UB_error}.
Although the simulated methods, both in Fig.~\ref{fig.CommunityRecoveryEval_wrt_measurementSize} and Fig.~\ref{fig.CommunityRecoveryEval_wrt_coverageSize}, do not perform as predictable as the theoretical error bound, most of them show an overall decrease in their recovery error when respectively, the number of measurements and the coverage size increase.
Note that the light shade in Fig.~\ref{fig.numerical_CommunityRecoveryEval} around the theoretical bound represents the ambiguity in the calculation of the bound (c.f. Sec.~\ref{appendix:simulations_methodology} of the supplementary materials). %Besides, the light shade regarding the four methods show the error variance as a result of applying the recovery algorithm to multiple synthetic randomly generated graphs, for each set of parameters. 

Note that the theoretical bound is the \textit{upper bound} for the $\text{MAP}$ estimator. Fig.~\ref{fig.numerical_CommunityRecoveryEval} shows that the upper bound seem to be loose in certain regimes (e.g. for small $\dimMatC,\protoCovSize$), in which existing methods perform better. However, as the measurement- and the coverage sizes increase, the theoretical error bound becomes tight and outperforms existing community detection methods with an increasing gap.
% This gap motivates future work to improve community detection algorithms to reach the theoretical potential.

\section{Conclusion and Future Work}
We introduced a mathematical framework based on the stochastic block model, that characterizes community recovery from coarse measured graphs. We developed theoretical conditions, on the quantity and properties of the measurements with respect to the community structure of the high-resolution graph, to achieve perfect recovery. The assumptions of homogeneous and balanced measurements were essential to this work. We leave to future work the relaxation of these assumptions.
Moreover, community recovery in a coarse measured graph, in which communities modeled using the weighted and overlapping stochastic block model, utilized edge weight binarization. Future work can look into community recovery without binarization, in which one would use full graph weight distribution for recovery.

Finally, a significant gap was observed between the performance of state-of-the-art community detection algorithms, with the theoretical error bounds derived in this paper, in certain regimes. 
This gap motivates future work to improve existing clustering algorithms to achieve its theoretical potential. 
{
An algorithmic investigation into recovery performance, e.g. similar to the variational inference approaches used in \citep{aicher2015learning,dulac2020mixed}, is a promising direction to future work and would complement our theoretical analyses. 
}
% This gap emphasizes the need for improving existing clustering methods to achieve better (theoretically possible) performances.
{
\section*{Acknowledgements}
This work was supported by the Natural Sciences and Engineering Research Council (NSERC) of Canada through a Discovery Research Grant; the National Science Foundation Grants CCF-2029044 and CCF-2048223; the National Institutes of Health Grant 1R01GM140468-01; Connaught International Scholarship for Doctoral Students; and the Vector Postgraduate Affiliate Award by Vector Institute for AI.
}
% \newpage
% \subsubsection*{References}
%\bibliographystyle{natbib}
\bibliography{refs}

% \makeatletter
% \@addtoreset{section}{part}
% \makeatother
% \setcounter{equation}{0}
% \setcounter{figure}{0}
% \setcounter{table}{0}
% \setcounter{page}{1}
\setcounter{section}{0}

\onecolumn
\aistatstitle{Supplementary Materials}
\section{More on Stronger recovery under the special Community Overlap (CO)-$1$ constraint in Sec.~\ref{subserc:fullSync_mainBody}}\label{appendix:perfect_sync}
% \subsection{Problem Statement}\label{subappendix:fullSync_problemStatement}

In the CO-$1$ constrained measurements with respect to a graph $G\in\mathcal{G}(\mathcal{V},E)$, each c-node only measures from one community. Hence, the profile \textit{matrix} $\genSyncMat$ defined in \eqref{gen_profile_def} has row-wise support of size $1$ and can be re-expressed as a profile \textit{vector} $\fullComAs\in [\dimMatCom]^{\dimMatC}$, where for all $i\in[\dimMatC]$, $\fullComAs_i$ identifies the index of the community with which the $i$th c-node aligns:
\begin{align}\label{full_comAs_def}
    \fullComAs_i= k \quad \textrm{if}\quad \genSyncMat_{ik}\neq 0
\end{align}
This way, the statics of $\matC$ is simplified as the following Lemma.
\begin{lemma}
Let $\matF\sim \textrm{SSBM}(\dimMatF,\dimMatCom,\intraComProb,\extraComProb)$ from which $\matC$ in \eqref{linear_coarsening_model} is measured under the $\protoCovSize$-homogeneous and CO-$1$ measurement constraints. Then, $\matC_{ij}$'s are i.i.d. random variables for all $i>j$, with distribution:
\begin{align}\label{matcDist_perfSync}
        \matC_{ij} \sim \left\lbrace \begin{array}{ll}
        \textrm{Binomial}(\protoCovSize^2,\intraComProb)& \textrm{if } \fullComAs_i=\fullComAs_j \\
        \textrm{Binomial}(\protoCovSize^2,\extraComProb) & \textrm{else}
    \end{array}\right..
\end{align}
\end{lemma}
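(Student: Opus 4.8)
The plan is to obtain this Lemma as a direct specialization of the general statistics already established in Lemma~\ref{lemma:coarse_mat_stat}, the only new ingredient being that under the CO-$1$ constraint each c-node profile collapses onto a single community. First I would observe that, under the $\protoCovSize$-homogeneous constraint (Def.~\ref{def_homogenous}) together with CO-$1$ (the $\COvar=1$ case of Def.~\ref{def:PS_def}), each row $\genSyncVec_i$ of the profile matrix $\genSyncMat=\matLinSens\comFine^\intercal$ in \eqref{gen_profile_def} has support of size exactly one. Since every fine node belongs to exactly one of the $\dimMatCom$ communities, the row sum $\sum_{k}\genSyncMat_{ik}$ equals the total number of fine nodes measured by the $i$-th c-node, namely $|\textrm{supp}(\mathbf{b}_i)|=\protoCovSize$. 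Combining a support of size one with this row sum forces the single nonzero entry to equal $\protoCovSize$; that is, $\genSyncVec_i=\protoCovSize\,\genvec_{\fullComAs_i}$, where $\genvec_k$ is the $k$-th standard basis (row) vector and $\fullComAs_i$ is the aligned community index from \eqref{full_comAs_def}.

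Next I would compute the pairwise inner product $\genSyncVec_i^\intercal\genSyncVec_j$ that parametrizes the distribution in \eqref{homogeneous_general_dist_matC}. When $\fullComAs_i=\fullComAs_j$ the two profiles are aligned, so $\genSyncVec_i^\intercal\genSyncVec_j=\protoCovSize^2$; when $\fullComAs_i\neq\fullComAs_j$ the profiles are supported on distinct coordinates and hence orthogonal, giving $\genSyncVec_i^\intercal\genSyncVec_j=0$. Substituting these two values into the Poisson Binomial of Lemma~\ref{lemma:coarse_mat_stat} produces, respectively, a distribution with all $\protoCovSize^2$ success probabilities equal to $\intraComProb$ and none equal to $\extraComProb$, and a distribution with all $\protoCovSize^2$ success probabilities equal to $\extraComProb$ and none equal to $\intraComProb$.

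The final step invokes the elementary fact that a Poisson Binomial distribution whose success probabilities are all identical is exactly an ordinary Binomial distribution. Thus the two cases reduce to $\textrm{Binomial}(\protoCovSize^2,\intraComProb)$ and $\textrm{Binomial}(\protoCovSize^2,\extraComProb)$, which is precisely \eqref{matcDist_perfSync}. The i.i.d.\ property for all $i>j$ is inherited verbatim from Lemma~\ref{lemma:coarse_mat_stat}, whose independence claim rests only on the disjointness of the measurement vectors $\mathbf{b}_i$ and not on the CO-$\COvar$ value. I do not anticipate any real obstacle; the sole point requiring care is the verification that the single profile entry equals $\protoCovSize$ rather than some smaller value, which follows from pairing homogeneity with the partition structure of the communities. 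Everything else is a mechanical substitution into the already-proven general lemma.
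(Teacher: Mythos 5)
Your proposal is correct and takes essentially the same route as the paper, whose entire proof is the remark that the result follows directly from Lemma~\ref{lemma:coarse_mat_stat} together with the profile-vector identification in \eqref{full_comAs_def}. You have simply made explicit the details the paper leaves implicit: that homogeneity plus CO-$1$ forces each profile row to be $\protoCovSize$ times a standard basis vector, that the inner products in \eqref{homogeneous_general_dist_matC} are therefore $\protoCovSize^2$ or $0$, and that a Poisson Binomial with identical success probabilities is an ordinary Binomial.
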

\begin{proof}
The result is straightforwardly derived from Lemma \ref{lemma:coarse_mat_stat} and the connection between the c-node profile matrix and vector in \eqref{full_comAs_def}.
\end{proof}

Assuming a uniform prior over the set $[\dimMatCom]$ on the $\fullComAs$'s elements, the community recovery problem is reduced to estimating the c-node profile vector $\fullComAs$ (i.e. assigning an element of $[\dimMatCom]$ to each c-node) from a graph with weighted adjacency matrix $\matC$ of \eqref{linear_coarsening_model} that is measured from $\matF\sim \textrm{SSBM}(\dimMatF,\dimMatCom,\intraComProb,\extraComProb)$ under the $\protoCovSize$-homogeneous, balanced, and CO-$1$ constraints.

Let an $\text{MAP}$ estimator take a measured graph $\tilde{G}$ with the true c-node profile vector $\fullComAs$, and estimate $\hat{\fullComAs}$ which assigns an element of $[\dimMatCom]$ to every c-node in $\tilde{G}$. In the following, we study the conditions such that the probability of failure, i.e. assigning a wrong community to at least one c-node considering the relabelling of communities, approaches $0$ when the intra- and extra-community distribution of associations are $\textrm{Binomial}(\protoCovSize^2,\intraComProb)$ and $\textrm{Binomial}(\protoCovSize^2,\extraComProb)$, respectively. 

% \subsection{Main Results}
% From the explanations in Sec.~\ref{subappendix:fullSync_problemStatement}, 
As explained before, 
the $\matC$ distributed as \eqref{matcDist_perfSync} can be thought of as a graph representation modeled as a weighted variant of the SSBM (WSSBM). The problem of community recovery from the WSSBM is addressed in \citep{jog2015information,xu2020optimal}.
The following Lemma uses { Theorem 3.2} in \citep{jog2015information} to find an upper bound on the probability of $\text{MAP}$ failure in estimating the c-node profiles.

\begin{lemma}\label{lemma:fullsync_UB_error}
Let $\matF\sim \textrm{SSBM}(\dimMatF,\dimMatCom,\intraComProb,\extraComProb)$ from which $\matC$ in \eqref{linear_coarsening_model} is measured under the $\protoCovSize$-homogeneous and CO-$1$ constraints. The probability that the $\text{MAP}$ estimator 
% in \eqref{MAP_fullSync}
fails to recover the c-node profile vector $\fullComAs\in [K]^{\dimMatC}$ from $\matC$ (up to relabelling of the indices), is upper-bounded by:
\begin{align}\label{UB_MLFailureError}
    \begin{array}{c}
       \mathbb{P}(\textrm{MAP failure}) \leq \displaystyle\sum_{m=1}^{\lfloor\frac{\dimMatC}{2\dimMatCom}\rfloor} \min\lbrace (\frac{e \dimMatC\dimMatCom}{m})^m, \dimMatCom^{\dimMatC}\rbrace e^{(-\frac{\dimMatC}{\dimMatCom} m+m^2)I_{}} \\
         \qquad\qquad\qquad + \displaystyle\sum_{m=\lfloor\frac{\dimMatC}{2\dimMatCom}\rfloor+1}^{\dimMatC} \min\lbrace (\frac{e \dimMatC\dimMatCom}{m})^m, \dimMatCom^{\dimMatC}\rbrace e^{-\frac{2m \dimMatC}{9\dimMatCom}I_{}}
    \end{array},
\end{align}
where 
\begin{align}\label{renyi_div_binomial}
    I_{} \triangleq -2 \protoCovSize^2 \log\left(\sqrt{(1-\intraComProb)(1-\extraComProb)}+\sqrt{\intraComProb\extraComProb} \right).
\end{align}
\end{lemma}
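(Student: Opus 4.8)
The plan is to recognize the CO-$1$ coarse graph as a \emph{weighted symmetric} SBM and then invoke the achievability bound of Theorem~3.2 in \citep{jog2015information} almost verbatim, the only model-specific input being an order-$\tfrac12$ Rényi divergence that I would compute in closed form. First I would use the preceding Lemma, i.e. the distribution in \eqref{matcDist_perfSync}, to note that under the $\protoCovSize$-homogeneous and CO-$1$ constraints the entries $\matC_{ij}$ ($i>j$) are independent, with intra-community law $\textrm{Binomial}(\protoCovSize^2,\intraComProb)$ and inter-community law $\textrm{Binomial}(\protoCovSize^2,\extraComProb)$, the two cases being selected by whether $\fullComAs_i=\fullComAs_j$. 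With the uniform prior on the entries of $\fullComAs$ the MAP estimator coincides with the maximum-likelihood estimator, so recovering $\fullComAs$ from $\matC$ is exactly the exact-recovery problem for a balanced $\dimMatCom$-community weighted SSBM on $\dimMatC$ c-nodes (communities of size $\dimMatC/\dimMatCom$), whose intra- and inter-community edge-weight distributions are the two Binomials above. This is precisely the setting covered by \citep{jog2015information}.

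Next I would isolate the single quantity that feeds their bound: the order-$\tfrac12$ Rényi divergence $I$ between the intra- and inter-community edge-weight distributions, equivalently $-2\log$ of their Bhattacharyya affinity. The one genuine computation is to evaluate this affinity for the two Binomials. Writing it out and applying the binomial theorem,
\begin{equation*}
\sum_{k=0}^{\protoCovSize^2} \binom{\protoCovSize^2}{k}\,(\sqrt{\intraComProb\extraComProb})^{k}\,(\sqrt{(1-\intraComProb)(1-\extraComProb)})^{\protoCovSize^2-k}
= \left(\sqrt{\intraComProb\extraComProb}+\sqrt{(1-\intraComProb)(1-\extraComProb)}\right)^{\protoCovSize^2},
\end{equation*}
so that $I=-2\protoCovSize^2\log\!\left(\sqrt{\intraComProb\extraComProb}+\sqrt{(1-\intraComProb)(1-\extraComProb)}\right)$, matching \eqref{renyi_div_binomial}. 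It is worth flagging that, although $\matC_{ij}$ reports only the aggregate count (a sum of $\protoCovSize^2$ Bernoullis) rather than the individual fine edges, the affinity of the two Binomials exactly equals the $\protoCovSize^2$-th power of the single-Bernoulli affinity, so there is no loss from aggregation in this divergence — a convenient coincidence of the binomial theorem that I would highlight.

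Finally I would substitute this $I$ into the $\dimMatCom$-community form of Theorem~3.2 of \citep{jog2015information}, translating their node count to $\dimMatC$. Their bound is a union bound over relabellings that misclassify $t$ c-nodes (the summation index, written $m$ in \eqref{UB_MLFailureError}): the combinatorial prefactor $\min\{(\tfrac{e\dimMatC\dimMatCom}{t})^{t},\dimMatCom^{\dimMatC}\}$ counts such relabellings, bounded either by $\binom{\dimMatC}{t}(\dimMatCom-1)^{t}\le(\tfrac{e\dimMatC}{t})^{t}\dimMatCom^{t}$ or trivially by the total number of labellings $\dimMatCom^{\dimMatC}$; the two exponential factors, with exponents $(-\tfrac{\dimMatC}{\dimMatCom}t+t^2)I$ for $t\le\lfloor\dimMatC/(2\dimMatCom)\rfloor$ and $-\tfrac{2t\dimMatC}{9\dimMatCom}I$ for larger $t$, are the two large-deviation estimates in their argument for the probability that such a relabelling beats the truth in likelihood. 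Collecting terms over $t$ yields \eqref{UB_MLFailureError}.

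\emph{Main obstacle.} The computations are light; the care is in checking that the hypotheses of \citep{jog2015information} transfer cleanly. Specifically I would verify that (i) their theorem is stated for arbitrary edge-weight laws through the Rényi divergence, so the Binomial case is in scope with no modification; (ii) the equal community-size assumption ($\dimMatC/\dimMatCom$ per community) behind the $\tfrac{\dimMatC}{\dimMatCom}$ and $\tfrac{2\dimMatC}{9\dimMatCom}$ factors is legitimate here — under CO-$1$ the ``balanced'' constraint of Def.~\ref{def:balanced} is vacuous, so this must be supplied by a balanced \emph{assignment} of c-nodes to communities (or handled by conditioning on a balanced $\fullComAs$); and (iii) that MAP-with-uniform-prior equals the ML estimator their result governs. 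These are bookkeeping checks rather than new mathematics, which is why the statement follows by ``adjusting the notation'' of their theorem.
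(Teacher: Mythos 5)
Your proposal is correct and follows essentially the same route as the paper: the paper likewise models the CO-$1$ coarse graph as a weighted SSBM with intra/inter edge laws $\textrm{Binomial}(\protoCovSize^2,\intraComProb)$ and $\textrm{Binomial}(\protoCovSize^2,\extraComProb)$, imports the bound \eqref{UB_MLFailureError} directly from Theorem~3.2 of \citep{jog2015information} after notation adjustment, and evaluates the order-$\tfrac12$ R\'enyi divergence between the two Binomials to obtain \eqref{renyi_div_binomial}. Your write-up is in fact more complete than the paper's, which dismisses the Bhattacharyya-affinity computation as ``straightforward calculations'' where you supply the binomial-theorem identity, and which silently assumes the hypothesis transfers (balanced community sizes, MAP-equals-ML under the uniform prior) that you explicitly flag.
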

% Proof sketch
% \begin{proof}
The error bound in \eqref{UB_MLFailureError} is directly taken from { Theorem 3.2} of \citep{jog2015information} after some notation adjustments. Also, we derived the Renyi divergence in \eqref{renyi_div_binomial} in a closed from using the distributions in \eqref{matcDist_perfSync}. A complete proof can be found in Sec.~\ref{appendix:proof_lemma_fullSyncUBError} of the supplementary materials.
% \end{proof}

Lemma \ref{lemma:fullsync_UB_error} not only helps us characterize and predict the behaviour of the community recovery error, but also allows studying the conditions on the parameters of the underlying generative SSBM on $\matF$, that asymptotically guarantee such community recovery. This has been investigated in Theorem~\ref{thm:fullsync_UB_error_cond}, in which the recovery of the c-node profile matrix $\genSyncMat$ in \eqref{gen_profile_def} is equivalent to the recovery of the c-node profile vector $\fullComAs\in [\dimMatCom]^{\dimMatC}$ defined in \eqref{full_comAs_def}.

For better visualization and understanding, the recovery conditions in Theorem~\ref{thm:fullsync_UB_error_cond} is illustrated in the last column of Table~\ref{table:compare_recovery_fullsync}. The table compares the recovery conditions of a coarse measured graph with those of the classic SBM existing in the literature, in terms of various scaling of the parameters. The first column provides exhaustive partitioning of the connection probability scaling of the fine graph $\scaleFineComProb$, for which the second column shows state-of-the-art conditions to allow exact recovery. In the third column, and corresponding to each fine connection probability scaling, different scalings of the coarsening coverage size $\protoCovSize$ {(i.e. the number of measured fine nodes combined into a c-node)} are considered that will result in separate recovery conditions shown in the last column. 
%The fourth column is the scaling of $\protoCovSize^2\scaleFineComProb$,  which is essentially the order that through our modeling, found most significant for our recovery purposes. 
% Finally, the last column provides the recovery conditions per the row's corresponding scaling.  
The additional constraint on the $\scaleFineComProb$ scaling in the last column comes from the inequality $\protoCovSize\leq\frac{\dimMatF}{\dimMatC}$ from the definition of the coverage size $\protoCovSize$ in Def.~\ref{def_homogenous}.

% \begin{remark}
From Lemma \ref{lemma:fullsync_UB_error} and its consequences demonstrated in Table~\ref{table:compare_recovery_fullsync}, a significant observation can be derived. When the connection probability scaling of an observed graph is less than logarithmic ${o}(\frac{\log\dimMatC}{\dimMatC})$, a classic (uncoarsened) community recovery is impossible \citep{abbe2015community}. However, graph coarsening allows for community recovery, by compensating via large coverage size. This is significant since in classic graphs with constant-size measurements, tending community recovery error to zero is impossible, while it is possible for coarsened graphs with coverage size growing as $\strictGreaterBigO(\sqrt{\frac{\log\dimMatC}{\scaleFineComProb\dimMatC}})$. Furthermore, for fixed-size coverage while tending the measurement-size to infinity, the $\constintraComProb,\constextraComProb$ gap to allow recovery becomes less harsh, as the coverage size becomes larger, compared with the classic version ($\protoCovSize=1$). 
% \end{remark}

% It is notable that the conditions in Theorem~\ref{thm:fullsync_UB_error_cond} and Corollary~\ref{corollary:fullsync_maximal_cond}, allow recovery from both assortative (denser intra-connections compared with extra-connections) and disassortative (denser extra-connections compared with intra-connections) community structures.

\begin{table}[!h]
\small
\caption{\small Comparison of the recovery conditions under the CO-$1$ constraint, derived from Theorem~\ref{thm:fullsync_UB_error_cond}. All scaling notations (${o}$ for strictly smaller, $\Theta$ for equal, and $\strictGreaterBigO$ for strictly greater orders) are defined with respect to $\dimMatC$. 
%$\dimMatF$ is the fine graph size, $\dimMatC$ denotes the number of measurements (coarse graph size), $\dimMatCom$ is the number of communities, $\protoCovSize$ is the number of fine nodes covered by each measurement, 
$\scaleFineComProb$ is the probability scaling of connections in the fine graph, 
%where $\dimMatC\leq \frac{\dimMatF}{\protoCovSize}$. We define 
$\scaleCoarseComProb$ represents the probability scaling of connections in the coarse graph, which is a function of the fine and coarse graphs sizes, and the coverage size to allow for comparison with the classic scenario (i.e. with $\dimMatF=\dimMatC, \protoCovSize=1$).} \label{table:compare_recovery_fullsync}
\begin{center}
\begin{tabular}{lllll}
$\scaleCoarseComProb$:  &\textbf{Classic (exact) Recovery} &  Scaling of  & \textbf{Recovery, This paper} & 
%Recovery, this paper 
\\  probability scaling  & SSBM$(\dimMatC,\constintraComProb\scaleCoarseComProbAt{\dimMatC}{1},\constextraComProb\scaleCoarseComProbAt{\dimMatC}{1})$  &  coarsening  & 
%WSSBM$(\dimMatC,\textrm{Binomial}(\protoCovSize^2,\constintraComProb\scaleFineComProb),$ &  
%maximal measurements
\\ of connections & \citep{zhang2016minimax} &  coverage size $\protoCovSize$ &
%$\qquad\qquad\qquad \textrm{Binomial}(\protoCovSize^2,\constextraComProb\scaleFineComProb)$ 
&
% i.e. $\dimMatF=\protoCovSize\dimMatC$
\\
in coarse graph &&&&\\
\hline \\
${o}(\frac{\log\dimMatC}{\dimMatC})$ &Impossible & ${o}(\sqrt{\frac{\log\dimMatC}{\scaleFineComProb\dimMatC}})$  & Impossible&  \\
%&&includes ${o}(1), 1, \strictGreaterBigO(1)$&&\\
& & $\genConstDegScale\sqrt{\frac{\log\dimMatC}{\scaleFineComProb\dimMatC}}$ &  Possible if $\frac{\constintraComProb+\constextraComProb}{2}-\sqrt{\constintraComProb\constextraComProb}\geq\frac{\dimMatCom}{2\genConstDegScale^2},$& \\
&&& $\qquad\qquad\textrm{and }\scaleFineComProb\geq \genConstDegScale^2\frac{\dimMatC\log\dimMatC}{\dimMatF^2}, \dimMatC,\dimMatF\rightarrow\infty$\\
%&&includes $\strictGreaterBigO(1)$&&\\
& & $\strictGreaterBigO(\sqrt{\frac{\log\dimMatC}{\scaleFineComProb\dimMatC}})$ &Possible if $\constintraComProb\neq\constextraComProb, \scaleFineComProb=\strictGreaterBigO(\frac{\dimMatC\log\dimMatC}{\dimMatF^2})$ & \\
%  &  & $\strictGreaterBigO(\frac{1}{\sqrt{\scaleFineComProb}})$ & Possible if $\constintraComProb\neq\constextraComProb$ & \\
&&&&\\
&&&&\\
$\genConstDegScale\frac{\log\dimMatC}{\dimMatC}$    &Possible if $\frac{\constintraComProb+\constextraComProb}{2}-\sqrt{\constintraComProb\constextraComProb}\geq\frac{\dimMatCom}{2\genConstDegScale},$ &${o}(1)$  & Impossible & \\
& $\dimMatC,\dimMatF\rightarrow\infty$ & $\Theta(1)$ &  Possible if $\frac{\constintraComProb+\constextraComProb}{2}-\sqrt{\constintraComProb\constextraComProb}\geq\frac{\dimMatCom}{2\genConstDegScale\protoCovSize^2},$& \\
&&& $\qquad\qquad\textrm{and }\scaleFineComProb\geq \protoCovSize^2\frac{\dimMatC\log\dimMatC}{\dimMatF^2}, \dimMatC,\dimMatF\rightarrow\infty$\\
%& & $\strictGreaterBigO(1)$ &  Possible if $\constintraComProb\neq\constextraComProb,\dimMatC,\dimMatF\rightarrow\infty$& \\
&  & $\strictGreaterBigO(1)$ & Possible if $\constintraComProb\neq\constextraComProb, \scaleFineComProb=\strictGreaterBigO(\frac{\dimMatC\log\dimMatC}{\dimMatF^2})$ & \\
&&&&\\
&&&&\\
${\strictGreaterBigO}(\frac{\log\dimMatC}{\dimMatC})$   & Possible if $\constintraComProb\neq\constextraComProb,\dimMatC,\dimMatF\rightarrow\infty$ & ${o}(\sqrt{\frac{\log\dimMatC}{\scaleFineComProb\dimMatC}})$ & Impossible & \\
& & $\genConstDegScale\sqrt{\frac{\log\dimMatC}{\scaleFineComProb\dimMatC}}$ &  Possible if $\frac{\constintraComProb+\constextraComProb}{2}-\sqrt{\constintraComProb\constextraComProb}\geq\frac{\dimMatCom}{2\genConstDegScale^2},$ & \\
&&& $\qquad\qquad\textrm{and }\scaleFineComProb\geq \genConstDegScale^2\frac{\dimMatC\log\dimMatC}{\dimMatF^2}, \dimMatC,\dimMatF\rightarrow\infty$\\
%&&includes ${o}(1)$&&\\
& & $\strictGreaterBigO(\sqrt{\frac{\log\dimMatC}{\scaleFineComProb\dimMatC}})$ & Possible if $\constintraComProb\neq\constextraComProb, \scaleFineComProb=\strictGreaterBigO(\frac{\dimMatC\log\dimMatC}{\dimMatF^2})$ & \\
%&&includes $1, \strictGreaterBigO(1)$&&\\
%  &  & $\strictGreaterBigO(\frac{1}{\sqrt{\scaleFineComProb}})$ & Possible if $\constintraComProb\neq\constextraComProb$ & \\
\end{tabular}
\end{center}
\end{table}

%-----------------------------------------------------
\section{More on Proof Techniques in Sec.~\ref{subsec:nonSync_proof_techniques}}\label{appendix:more_nonSync_proof_techniques}
We started the proof of Theorem~\ref{thm:nonsync_UB_error} and Corollary~\ref{corollary:nonsync_cond} by modeling the $\genSyncMat$ recovery problem under the CO-$\COvar$ constraints, as a community detection problem from a weighted OSBM. Next, we used a mapping to convert the problem to a community detection from a general unweighted SBM model. This conversation facilitates finding the estimation error bounds  in Theorem~\ref{thm:nonsync_UB_error} and the recovery conditions in Corollary~\ref{corollary:nonsync_cond}.

Before deriving the matrix of community connectivity probabilities $\genComConnect$ in Lemma~\ref{lemma:SBM_genSyncMat}, we first rewrite the formal definition for the general unweighted SBM, adopted from { Definition 1 in} \citep{abbe2017community}, in the following remark.
\begin{remark}
Let $V$ be a positive integer (the number of vertices), $\kappa$ be a positive integer (the number of communities), $\mathbf{s} = (\mathbf{s}_1,\cdots, \mathbf{s}_\kappa)$ be a probability vector on $[d]$ (the prior on the $d$ communities) and $A$ be a $\kappa$ by $d$ symmetric matrix with entries in $[0, 1]$
(the connectivity probabilities). The pair $(X,G)$ is drawn under $\textrm{SBM}(V,\mathbf{s},A)$ if $X$ is a $V$-dimensional
random vector with i.i.d. components distributed under $\mathbf{s}$, and $G$ is a $V$-node graph where vertices $i$ and $j$ are connected with probability $A_{X_i},A_{X_j}$, independently of other pairs of vertices.
% We also define the community sets by 
% i = i(X) := fv 2 [n] : Xv = ig; i 2 [k]
\end{remark}
% where $\mathbf{s}$ is the length-$\nonSyncDimMatCom{\COvar}$ vector prior distribution for c-nodes belonging to the extended communities, and $\genComConnect$ is . 
For notation simplicity in the following, rather than considering the pair $(X,G)$ being drawn from $\textrm{SBM}(V,\mathbf{s},A)$, we only use the adjacency matrix which is a representative notation for $G$.

\begin{lemma}\label{lemma:SBM_genSyncMat}
Let $\matF\sim \textrm{SSBM}(\dimMatF,\dimMatCom,p,q)$ from which $\matC$ in \eqref{linear_coarsening_model} is measured under the $\protoCovSize$-homogeneous, balanced, and CO-$\COvar$ constraints. Let $\matC$ binarize to $\biMatC$ matrix as in \eqref{matC_binarized}. 
$\biMatC$ is distributed as $\biMatC \sim \textrm{SBM}(\dimMatC, \mathbf{s}, \genComConnect)$, with $\mathbf{s}$ be the length-$\nonSyncDimMatCom{\COvar}$ vector prior distribution for the c-nodes' extended community profile vector, and for all $i,j\in[\dimMatC]$ and $\genSyncVec_i,\genSyncVec_j\in \profileSet$:
\begin{align}\label{U_poissonBinomial}
    \begin{array}{l}
       \genComConnect_{h(\genSyncVec_i),h(\genSyncVec_j)} =
        \mathbb{P}(X\geq \protoCovSize^2(\thresholdMatCin p + (1-\thresholdMatCin)q))
    \end{array},
\end{align}
for $X \sim \textrm{PoissonBinomial}(\lbrace p\rbrace^{\genSyncVec_i^\intercal\genSyncVec_j}, \lbrace q \rbrace^{\protoCovSize^2-\genSyncVec_i^\intercal\genSyncVec_j})$.
\end{lemma}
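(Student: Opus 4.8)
The plan is to assemble the claim directly from Lemma~\ref{lemma:coarse_mat_stat} together with the deterministic nature of the binarization map \eqref{matC_binarized}, verifying in turn the three ingredients demanded by the general SBM definition recalled above: i.i.d.\ community labels, conditionally independent edges, and a connection probability that factors through the pair of labels only. First I would invoke Lemma~\ref{lemma:coarse_mat_stat} to record that, conditionally on the profiles, the upper-triangular entries $\{\matC_{ij}\}_{i>j}$ are mutually independent, each distributed as $\textrm{PoissonBinomial}(\{\intraComProb\}^{\genSyncVec_i^\intercal\genSyncVec_j},\{\extraComProb\}^{\protoCovSize^2-\genSyncVec_i^\intercal\genSyncVec_j})$; the independence is exactly the disjoint-support property of the $\mathbf{b}_i$'s, since distinct pairs of c-nodes aggregate over disjoint blocks of fine edges of $\matF$.

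Next I would identify the extended communities. Under the balanced (Def.~\ref{def:balanced}) and CO-$\COvar$ (Def.~\ref{def:PS_def}) constraints, every row profile $\genSyncVec_i$ has support of size between $1$ and $\COvar$ and, since the row sums to $\protoCovSize$ under $\protoCovSize$-homogeneity, takes the constant value $\protoCovSize/|\textrm{supp}(\genSyncVec_i)|$ on its support; hence $\genSyncVec_i\in\profileSet$ as defined in \eqref{profile_set}, and $k=h(\genSyncVec_i)$ furnishes its extended-community label in $[\nonSyncDimMatCom{\COvar}]$. Because the $\mathbf{b}_i$'s have disjoint supports, distinct c-nodes aggregate disjoint blocks of fine nodes whose community memberships are drawn independently and uniformly; together with $\protoCovSize$-homogeneity this makes the labels $\{h(\genSyncVec_i)\}_{i\in[\dimMatC]}$ i.i.d., and I would \emph{define} $\mathbf{s}$ to be their common law on $[\nonSyncDimMatCom{\COvar}]$.

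Then I would push the binarization through. Since \eqref{matC_binarized} applies the same deterministic threshold entrywise, the variables $\{\biMatC_{ij}\}_{i>j}$ inherit mutual independence from the $\matC_{ij}$'s, and each is a Bernoulli variable with $\mathbb{P}(\biMatC_{ij}=1)=\mathbb{P}(\matC_{ij}\geq \protoCovSize^2(\thresholdMatCin\intraComProb+(1-\thresholdMatCin)\extraComProb))$, which by the Poisson Binomial law of $\matC_{ij}$ equals the right-hand side of \eqref{U_poissonBinomial}, namely $\genComConnect_{h(\genSyncVec_i),h(\genSyncVec_j)}$. The decisive observation — the one that turns an a priori inhomogeneous random graph into a genuine SBM — is that this probability depends on $(i,j)$ only through the inner product $\genSyncVec_i^\intercal\genSyncVec_j$, which is itself a function of the two profiles and hence, via the bijection $h$, of the two labels alone; symmetry of the inner product also gives $\genComConnect_{k,k'}=\genComConnect_{k',k}$, as the definition requires. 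Matching these three facts against the general SBM definition yields $\biMatC\sim\textrm{SBM}(\dimMatC,\mathbf{s},\genComConnect)$.

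I expect the only real subtlety to lie in the second step: justifying that the extended-community labels are genuinely i.i.d.\ with a well-defined prior $\mathbf{s}$, rather than merely independent. This rests on combining the disjointness of the measurement supports (for independence) with $\protoCovSize$-homogeneity and the uniform fine-node assignment (for identical distribution); the balanced and CO-$\COvar$ constraints are what guarantee each realized profile actually lands in $\profileSet$ so that $h$ is applicable. The remaining steps are bookkeeping around Lemma~\ref{lemma:coarse_mat_stat} and the definition \eqref{matC_binarized}.
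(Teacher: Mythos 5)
Your proposal is correct and takes essentially the same route as the paper's proof: the paper simply computes $\genComConnect_{h(\genSyncVec_i),h(\genSyncVec_j)} = \mathbb{P}[\biMatC_{ij}=1\,|\,\genSyncVec_i,\genSyncVec_j] = \mathbb{P}[\matC_{ij}\geq \protoCovSize^2(\thresholdMatCin p + (1-\thresholdMatCin)q)\,|\,\genSyncVec_i,\genSyncVec_j]$ and invokes the Poisson Binomial law from Lemma~\ref{lemma:coarse_mat_stat}, exactly as in your third step. Your additional verifications (that the profiles land in $\profileSet$, that the labels are i.i.d.\ with a well-defined prior $\mathbf{s}$, and that edge independence survives the entrywise thresholding) are points the paper's two-line proof leaves implicit, so your write-up is a more careful rendering of the same argument rather than a different one.
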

% \begin{proof}
The proof is found in Sec.~\ref{appendix:binarized_SBM_dist} in supplementary materials.
% \end{proof}
\begin{remark}\label{normalized_profile_ind}
The $\protoCovSize$-normalized c-node profile vectors, i.e. $\frac{\genSyncVec_i}{\protoCovSize}$ for all $i\in[\dimMatC]$, corresponding to $\protoCovSize$-homogeneous, balanced, and CO-$\COvar$ constrained measurements, take values in the set $\lbrace \frac{1}{\COvar},\frac{1}{\COvar-1},\cdots,\frac{1}{2},1\rbrace$ and are independent of $\dimMatF,\dimMatC,\protoCovSize$. Moreover, $\protoCovSize$ is assumed to be divisible by $\lbrace 2, \cdots,\COvar \rbrace$.
\end{remark}
The remaining proof of Corollary~\ref{corollary:nonsync_cond} is elaborated in Sec.~\ref{appendix:proof_corollary_nonSyncCond} in supplementary materials.

%-----------------------------------------------------
\section{Methodology of the Numerical Results in Sec.~\ref{sec:numerical_results}}\label{appendix:simulations_methodology}
The numerical results is essentially comprised of two parts: 
\begin{itemize}
    \item \textbf{Calculating the theoretical error bound derived in Theorem~\ref{thm:nonsync_UB_error}}:
    The bound computes \eqref{UB_nonSync_MLFailureError}, for which we need to calculate $\genComConnect$. From Lemma~\ref{lemma:SBM_genSyncMat}, each element of $\genComConnect$ is a cumulative Poisson Binomial distribution which is intractable to compute for our parameter values. Hence, we calculate the mean, as well as the lower and upper bounds of $\genComConnect$ elements derived in Lemma~\ref{lemma:SBM_genSyncMat_ULBounds}, to take into the ambiguity of the $\genComConnect$ evaluation.
    \item \textbf{Evaluating the performance of existing state-of-the art community detection methods on synthetically generated graphs}: we generate fine graphs $\matF$, with their corresponding community assignment matrices $\comFine$, using the SBM random graph generators in \textit{networkX} Python module.
    We fix $\COvar$ and $\mathbf{s}$, the prior on the extended communities, based on which a profile matrix $\genSyncMat$ was randomly generated under the $\protoCovSize$-homogeneous, balanced, and CO-$\COvar$ constraints. Next, we coarse measure the fine graph according to \eqref{linear_coarsening_model}, using a randomly generated $\matLinSens$ (using profile matrix $\genSyncMat$ and the community assignment matrix $\comFine$ of the initially generated graph). 
    % The proposed methodology for community detection under the CO-$\COvar$ constraint in Sec.~\ref{sec:general_sync} is essentially comprised of three main steps: 1) binarization of the coarse graph $\matC$, 2) mapping the c-node profiles to extended communities, and 3) recover the extended communities from the binarized $\matC$. 
    Note that as explained in Sec.~\ref{sec:general_sync}, we utilized the general SBM framework to \textit{characterize} and derive error bounds for the recovery of communities from the binarized, SBM-represented coarse graphs. The \textit{algorithm} for such community recovery does not yet have an efficient implementation \citep{abbe2015community}. We use the following four existing state-of-the-art \textit{overlapping} community detection methods that are applied to the generated, and later binarized, coarse graphs: 
    \begin{enumerate}
        \item Modularized Non-Negative Matrix Factorization (M-NMF) \citep{wang2017community},
        \item Speaker-listener Label Propagation Algorithm (SLPA) \citep{xie2011slpa},
        \item Non-Negative Symmetric Encoder-Decoder (NNSED) \citep{sun2017non},
        \item Cluster Affiliation Model for Big Networks (BigClam) \citep{yang2013overlapping}.
    \end{enumerate}
    We used CDLIB python module with the implementations of these algorithms \citep{rossetti2019cdlib}. 
    %Among these algorithms, we choose the modularized non-negative matrix factorization (M-NMF) \citep{wang2017community}, which showed the best results in our experiments. There is easy access to the community ground truth since the graphs are generated synthetically. 
    To evaluate the goodness of recovery, we use the $1-$``nF1'' measure, i.e. normalized F1 subtracted from $1$ \citep{rossetti2016novel}, to evaluate the overlapping community detection error. nF1 is considered a standard and computationally tractable community evaluation measure, also implemented as part of the CDLIB module.
    %The larger nF1 becomes, the more fit the recovery is compared to the ground truth (optimal recovery gives $1$ and the worst recovery gives $0$). 
    % \begin{remark}
    % Measuring the exact community recovery error in the previous sections, between the coarse network and the ground truth, is computationally prohibitive due to community relabelling and NP-hardness of graph homomorphism. Hence, several evaluation measures for community detection, like nF1, are introduced in the literature to circumvent this issue \citep{rossetti2019cdlib}. 
    % \end{remark}
\end{itemize}

%-----------------------------------------------------
\section{Remaining Proofs}
%-----------------------------------------------------
\subsection{Proof of Lemma \ref{lemma:coarse_mat_stat}}\label{appendix:proof_lemma_coarse_mat_stat}
\begin{proof}
Given the SSBM generative model for $\matF$ and the measurement matrix $\matLinSens$, each element in $\matC$ in \eqref{linear_coarsening_model} is the sum of independent Bernoulli random variables with $p$ and $q$ success probabilities. Since $\mathbf{b}_i$'s are disjointed, $\matC_{ij}$'s become independent random variables for all $i>j$ with distribution:
\begin{align}\label{general_dist_matC}
    \begin{array}{ll}
        \matC_{ij} \sim
         & \textrm{PoissonBinomial}(\lbrace  \intraComProb\rbrace^{\genSyncVec_i^\intercal\genSyncVec_j}, \lbrace \extraComProb \rbrace^{|\textrm{supp}(\mathbf{b}_i)| |\textrm{supp}(\mathbf{b}_j)|-\genSyncVec_i^\intercal\genSyncVec_j})
    \end{array}.
\end{align}
Note that \eqref{general_dist_matC} generally holds for the measurement matrices defined prior to \eqref{linear_coarsening_model}.
Under the $\protoCovSize$-homogeneous measurement assumption defined in Def.~\ref{def_homogenous}, for all $i\in[\dimMatC]$ we have:
\begin{align}
    |\textrm{supp}(\mathbf{b}_i)|=\protoCovSize.
\end{align}
Hence, equation \eqref{general_dist_matC} simplifies to \eqref{homogeneous_general_dist_matC}.
Both \eqref{general_dist_matC} and \eqref{homogeneous_general_dist_matC} show the likelihoods of $\matC$ elements given $\matLinSens$ and $\genSyncMat$. 
\end{proof}

%%-----------------------------------------------------------------

\subsection{Proof of Remark~\ref{remark:CH_dominant_term}}\label{appendix:proof_remark_CH_dominant_term}
\begin{proof}
We intend to find a lower bound on $\scaledCHdiv$ in \eqref{scaled_CH_divergence_def} whose dominant term is sufficiently simple to derive interpretable observations from Theorem~\ref{thm:nonsync_UB_error}. 
We start with the definition in \eqref{scaled_CH_divergence_def}, by fixing the parameter $t=\frac{1}{2}$:
\begin{align}\label{CH_LB1}
    \begin{array}{ll}
         \scaledCHdiv(\textrm{diag}(\mathbf{s})\genComConnect_{k},\textrm{diag}(\mathbf{s})\genComConnect_{k'}) & \\
         \qquad \geq \displaystyle\sum_{k''\in[\nonSyncDimMatCom{\COvar}]} 
         \mathbf{s}_{k''}[
        \frac{\genComConnect_{kk''}+ \genComConnect_{k'k''}}{2} - \sqrt{\genComConnect_{kk''}\genComConnect_{k'k''}} 
         ]&
         \\
         \qquad\geq \displaystyle\max_{k''\in[\nonSyncDimMatCom{\COvar}]} 
         \mathbf{s}_{k''}[
        \frac{\genComConnect_{kk''}+ \genComConnect_{k'k''}}{2} - \sqrt{\genComConnect_{kk''}\genComConnect_{k'k''}} 
         ]
         \\
         \qquad = \displaystyle\max_{k''\in[\nonSyncDimMatCom{\COvar}]} 
         \mathbf{s}_{k''}
        \frac{(\sqrt{\genComConnect_{kk''}}- \sqrt{\genComConnect_{k'\hat{k}}})^2}{2}  &     
    \end{array}.
\end{align}

To continue, we provide lower and upper bounds for the $\genComConnect$ elements in the following Lemma.
\begin{lemma}\label{lemma:SBM_genSyncMat_ULBounds}
The elements of the extended community connectivity matrix $\genComConnect$ defined in \eqref{U_poissonBinomial_initDef} can be upper and lower bounded, for all $\mathbf{a}, \mathbf{a}'\in \profileSet$, by:
\begin{align}\label{small_deviation_bound}
    \begin{array}{ll}
        |\genComConnect_{h(\mathbf{a}),h(\mathbf{a}')} - \Psi(\frac{\protoCovSize(p-q)((\frac{\mathbf{a}}{\protoCovSize})^\intercal\frac{\mathbf{a}'}{\protoCovSize}-\thresholdMatCin)}{\sqrt{p(1-p)(\frac{\mathbf{a}}{\protoCovSize})^\intercal\frac{\mathbf{a}'}{\protoCovSize}+q(1-q)(1-(\frac{\mathbf{a}}{\protoCovSize})^\intercal\frac{\mathbf{a}'}{\protoCovSize})}})|   & \\
        \qquad\qquad\qquad\qquad\leq \frac{0.7915}{\protoCovSize\sqrt{p(1-p)(\frac{\mathbf{a}}{\protoCovSize})^\intercal\frac{\mathbf{a}'}{\protoCovSize}+q(1-q)(1-(\frac{\mathbf{a}}{\protoCovSize})^\intercal\frac{\mathbf{a}'}{\protoCovSize})}} &
    \end{array}.
\end{align}
where $\Psi(x)=\int_{-\infty}^x \frac{1}{\sqrt{2\pi}} \exp(-x^2/2)$ is the cumulative distribution function for the normal distribution.
\end{lemma}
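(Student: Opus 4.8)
The plan is to recognize that $\genComConnect_{h(\mathbf{a}),h(\mathbf{a}')} = \mathbb{P}(X \geq \tau)$ is the upper tail of a Poisson Binomial variable $X$, i.e.\ a sum of $\protoCovSize^2$ independent Bernoulli trials ($\mathbf{a}^\intercal\mathbf{a}'$ of them with parameter $p$ and the remaining $\protoCovSize^2 - \mathbf{a}^\intercal\mathbf{a}'$ with parameter $q$), evaluated at the threshold $\tau = \protoCovSize^2(\thresholdMatCin p + (1-\thresholdMatCin)q)$. The function $\Psi$ in the statement is the Gaussian CDF, so the claimed inequality is exactly a Berry--Esseen (central-limit) estimate; the $0.7915/\sigma$ form of the right-hand side, with $\sigma$ the standard deviation of $X$, signals the use of the Berry--Esseen theorem for sums of independent, non-identically distributed summands, whose best known universal constant in this regime is $0.7915$. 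First I would invoke this theorem in the form $\sup_x |F_X(x) - \Psi((x-\mu)/\sigma)| \leq 0.7915\,\sigma^{-3}\sum_i \rho_i$, where $\mu,\sigma^2$ are the mean and variance of $X$ and $\rho_i$ is the third absolute central moment of the $i$-th Bernoulli summand.

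Next I would carry out two elementary matchings. For the \emph{location}, a direct computation gives $\mu = p\,\mathbf{a}^\intercal\mathbf{a}' + q(\protoCovSize^2-\mathbf{a}^\intercal\mathbf{a}')$ and $\sigma^2 = p(1-p)\mathbf{a}^\intercal\mathbf{a}' + q(1-q)(\protoCovSize^2-\mathbf{a}^\intercal\mathbf{a}')$; factoring $\protoCovSize^2$ and $\protoCovSize$ out respectively and using $\mathbf{a}^\intercal\mathbf{a}'/\protoCovSize^2 = (\mathbf{a}/\protoCovSize)^\intercal(\mathbf{a}'/\protoCovSize)$, one checks that $(\mu-\tau)/\sigma$ equals exactly the argument of $\Psi$ in the statement and that $\sigma$ equals the denominator on the right-hand side. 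For the \emph{tail-to-CDF conversion}, I would write $\mathbb{P}(X\geq\tau) = 1 - F_X(\tau^-)$ and use the Gaussian symmetry $\Psi((\mu-\tau)/\sigma) = 1 - \Psi((\tau-\mu)/\sigma)$, so that $|\genComConnect_{h(\mathbf{a}),h(\mathbf{a}')} - \Psi((\mu-\tau)/\sigma)| = |F_X(\tau^-) - \Psi((\tau-\mu)/\sigma)|$, which is at most the Kolmogorov distance $\sup_x|F_X(x) - \Psi((x-\mu)/\sigma)|$ controlled by Berry--Esseen. Since $\tau$ is generically non-integer while $X$ is integer-valued, $\tau$ is not an atom of $X$ and $\Psi$ is continuous, so replacing $\tau^-$ by $\tau$ incurs no loss and no lattice boundary correction is needed.

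Finally I would discharge the third-moment factor. For a single Bernoulli$(\pi)$ trial the variance is $\pi(1-\pi)$ and the third absolute central moment is $\rho = \pi(1-\pi)[\pi^2 + (1-\pi)^2]$; since $\pi^2 + (1-\pi)^2 = 1 - 2\pi(1-\pi) \leq 1$, we obtain $\rho \leq \pi(1-\pi)$, i.e.\ $\rho_i \leq \sigma_i^2$ for every summand. Summing yields $\sum_i \rho_i \leq \sum_i \sigma_i^2 = \sigma^2$, hence $\sigma^{-3}\sum_i\rho_i \leq \sigma^{-1}$, which converts the Berry--Esseen bound into precisely $0.7915/\sigma = 0.7915\,\big(\protoCovSize\sqrt{p(1-p)(\mathbf{a}/\protoCovSize)^\intercal(\mathbf{a}'/\protoCovSize)+q(1-q)(1-(\mathbf{a}/\protoCovSize)^\intercal(\mathbf{a}'/\protoCovSize))}\big)^{-1}$ and completes the proof. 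The only genuinely delicate points are (i) citing a non-i.i.d.\ Berry--Esseen theorem whose universal constant does not exceed $0.7915$, and (ii) confirming the discreteness observation so that the one-sided tail matches the two-sided Kolmogorov bound; the algebraic identification of the $\Psi$-argument and of $\sigma$, and the moment reduction, are routine.
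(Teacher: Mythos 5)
Your proposal is correct and takes essentially the same route as the paper: identify $\genComConnect_{h(\mathbf{a}),h(\mathbf{a}')}$ as the upper tail of a Poisson Binomial variable, match its mean $\mu$ and standard deviation $\sigma$ to the argument of $\Psi$ and to the denominator of the bound, and control the tail-to-Gaussian discrepancy by a Berry--Esseen estimate of the form $0.7915/\sigma$. The only difference is that the paper invokes this estimate as a black box (Theorem 3.5 of the cited Poisson-Binomial reference), whereas you re-derive it from the general non-i.i.d.\ Berry--Esseen theorem with constant $0.7915$ plus the elementary Bernoulli moment bound $\rho_i \leq \sigma_i^2$, which makes your write-up self-contained but not a genuinely different argument.
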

\begin{proof}
The proof can be found in Sec.~\ref{appendix:proof_lemma_SBM_genSyncMat_ULBounds} of the supplementary materials.
\end{proof}
By defining $k=h(\mathbf{a}),k'=h(\mathbf{a}')$, and $k''=h(\mathbf{a}'')$, we substitute \eqref{small_deviation_bound} into \eqref{CH_LB1} to further simplify the lower bound:
\begin{small}
\begin{align}\label{CH_LB2}
    \begin{array}{ll}
         \scaledCHdiv(\textrm{diag}(\mathbf{s})\genComConnect_{k},\textrm{diag}(\mathbf{s})\genComConnect_{k'}) & \\
         \geq  \displaystyle\max_{\mathbf{a}''\in \profileSet} 
         \frac{\mathbf{s}_{h(\mathbf{a}'')}}{2}
        \left(\sqrt{\max\left[
        \Psi(\frac{(\constintraComProb-\constextraComProb)((\frac{\mathbf{a}}{\protoCovSize})^\intercal\frac{\mathbf{a}''}{\protoCovSize}-\thresholdMatCin)}{\sqrt{(\constintraComProb-\constextraComProb)(\frac{\mathbf{a}}{\protoCovSize})^\intercal\frac{\mathbf{a}''}{\protoCovSize}+\constextraComProb}} \protoCovSize\sqrt{\scaleFineComProb})
        -
        \frac{0.7915}{\protoCovSize\sqrt{\scaleFineComProb}\sqrt{\constintraComProb(1-\intraComProb)(\frac{\mathbf{a}}{\protoCovSize})^\intercal\frac{\mathbf{a}''}{\protoCovSize}+\constextraComProb(1-\extraComProb)(1-(\frac{\mathbf{a}}{\protoCovSize})^\intercal\frac{\mathbf{a}''}{\protoCovSize})}},0\right]
        }
        \right.
         & \\
        \qquad \qquad \qquad \qquad \left.
        - \sqrt{\max\left[
        \Psi(\frac{(\constintraComProb-\constextraComProb)((\frac{\mathbf{a}'}{\protoCovSize})^\intercal\frac{\mathbf{a}''}{\protoCovSize}-\thresholdMatCin)}{\sqrt{(\constintraComProb-\constextraComProb)(\frac{\mathbf{a}'}{\protoCovSize})^\intercal\frac{\mathbf{a}''}{\protoCovSize}+\constextraComProb}} \protoCovSize\sqrt{\scaleFineComProb})
        +
        \frac{0.7915}{\protoCovSize\sqrt{\scaleFineComProb}\sqrt{\constintraComProb(1-\intraComProb)(\frac{\mathbf{a}'}{\protoCovSize})^\intercal\frac{\mathbf{a}''}{\protoCovSize}+\constextraComProb(1-\extraComProb)(1-(\frac{\mathbf{a}'}{\protoCovSize})^\intercal\frac{\mathbf{a}''}{\protoCovSize})}}
        ,0\right]}\right)^2   &     
    \end{array}.
\end{align}
\end{small}
\begin{assumption}\label{assumption:dominant_prior}
The prior vector $\mathbf{s}$ does not affect the dominant term in \eqref{CH_LB2}, i.e. the maximum term in \eqref{CH_LB2} is the same regardless of $\mathbf{s}_{h(\mathbf{a}'')}$ being included in the maximization or not. % being taken out 
\end{assumption}
If Assumption \eqref{assumption:dominant_prior} holds, a simple  solution that would give an intuitively good estimate for the index of the dominant community in \eqref{CH_LB2} is:
\begin{align}\label{dominant_idx0}
\hat{k} = h(\hat{\mathbf{a}}), \quad \hat{\mathbf{a}}=
\left\lbrace
\begin{array}{ll}
\displaystyle\argmax_{\mathbf{a}''\in \profileSet} (\mathbf{a}-\mathbf{a}')^\intercal\mathbf{a}'' & \mathrm{if} \quad \constintraComProb>\constextraComProb\\
\displaystyle\argmax_{\mathbf{a}''\in \profileSet} (\mathbf{a}'-\mathbf{a}')^\intercal\mathbf{a}'' & \mathrm{if} \quad \constintraComProb<\constextraComProb
\end{array}\right..
\end{align}
It is easily seen that $\hat{\mathbf{a}}$ in both conditions of\eqref{dominant_idx0} is achieved for:
\begin{align}\label{dominant_idx}
\hat{\mathbf{a}}_u = \left\lbrace
\begin{array}{rl}
1 & \mathrm{if} \quad u=\displaystyle\argmax_{v}|\mathbf{a}_v-\mathbf{a}_v'|, \quad \constintraComProb\neq\constextraComProb\\
0 & \mathrm{else}
\end{array}\right..
\end{align}
Equation \eqref{dominant_idx} shows that the index of the estimated dominant term is independent of all the parameters, except for the index of the community pair it is calculated for, i.e. $\mathbf{a},\mathbf{a}'$ (or equivalently $k,k'$).

We continue simplifying \eqref{CH_LB2} by defining: 
\begin{align}\label{in_dominant_term_Def}
\omega \triangleq \displaystyle\max_{v}(\frac{\mathbf{a}_v}{\protoCovSize},\frac{\mathbf{a}_v'}{\protoCovSize}).
\end{align}
From \eqref{dominant_idx}, we have: 
\begin{align}\label{in_dominant_term_Def2}
\left\lbrace
\begin{array}{ll}
(\frac{\mathbf{a}}{\protoCovSize})^\intercal\frac{\hat{\mathbf{a}}}{\protoCovSize}=\omega, 
(\frac{\mathbf{a}'}{\protoCovSize})^\intercal\frac{\hat{\mathbf{a}}}{\protoCovSize}=0 & \mathrm{if} \quad \constintraComProb>\constextraComProb \quad \mathrm{or}\\
(\frac{\mathbf{a}}{\protoCovSize})^\intercal\frac{\hat{\mathbf{a}}}{\protoCovSize}=0, 
(\frac{\mathbf{a}'}{\protoCovSize})^\intercal\frac{\hat{\mathbf{a}}}{\protoCovSize}=\omega & \mathrm{if} \quad \constintraComProb<\constextraComProb
\end{array}\right..
\end{align}
Replacing \eqref{in_dominant_term_Def2} into \eqref{CH_LB2} yields:
\begin{align}\label{dominant_term}
\begin{array}{ll}
         \scaledCHdiv(\textrm{diag}(\mathbf{s})\genComConnect_{k},\textrm{diag}(\mathbf{s})\genComConnect_{k'}) \geq 
         \frac{\mathbf{s}_{\hat{k}}}{2}& \\
        \left\lbrace\begin{array}{ll}
        \left(\sqrt{\max\left[
        \Psi(\frac{(\constintraComProb-\constextraComProb)(\omega-\thresholdMatCin)}{\sqrt{(\constintraComProb-\constextraComProb)\omega+\constextraComProb}} \protoCovSize\sqrt{\scaleFineComProb})
        -
        \frac{0.7915}{\protoCovSize\sqrt{\scaleFineComProb}\sqrt{\constintraComProb(1-\intraComProb)\omega+\constextraComProb(1-\extraComProb)(1-\omega)}}
        ,0\right]} \right.
         & \mathrm{if} \quad \constintraComProb>\constextraComProb\\
        \qquad \qquad \qquad \left.
        - \sqrt{\max\left[
        \Psi(\frac{(\constintraComProb-\constextraComProb)(-\thresholdMatCin)}{\sqrt{\constextraComProb}} \protoCovSize\sqrt{\scaleFineComProb})
        +
        \frac{0.7915}{\protoCovSize\sqrt{\scaleFineComProb}\sqrt{\constextraComProb(1-\extraComProb)}}
        ,0\right]}\right)^2 & \\
        \left(\sqrt{\max\left[
        \Psi(\frac{(\constintraComProb-\constextraComProb)(-\thresholdMatCin)}{\sqrt{\constextraComProb}} \protoCovSize\sqrt{\scaleFineComProb})
        -
        \frac{0.7915}{\protoCovSize\sqrt{\scaleFineComProb}\sqrt{\constextraComProb(1-\extraComProb)}}
        ,0\right]} \right.
         & \mathrm{if} \quad \constintraComProb<\constextraComProb \\
        \qquad \left.
        - 
        \sqrt{\max\left[
        \Psi(\frac{(\constintraComProb-\constextraComProb)(\omega-\thresholdMatCin)}{\sqrt{(\constintraComProb-\constextraComProb)\omega+\constextraComProb}} \protoCovSize\sqrt{\scaleFineComProb})
        +
        \frac{0.7915}{\protoCovSize\sqrt{\scaleFineComProb}\sqrt{\constintraComProb(1-\intraComProb)\omega+\constextraComProb(1-\extraComProb)(1-\omega)}}
        ,0\right]}
        \right)^2 & 
        \end{array}\right.
         &
    \end{array}.
\end{align}
Note that even if Assumption \ref{assumption:dominant_prior} does not hold, or if \eqref{dominant_idx0} does not give the maximum term in \eqref{CH_LB2}, i.e. if \eqref{dominant_idx} is not the dominant index of \eqref{CH_LB2}, the inequality in \eqref{dominant_term} is still true as a lower bound.

For $0<\thresholdMatCin<\frac{1}{\COvar}$ and $\constintraComProb\scaleFineComProb, \constextraComProb\scaleFineComProb\leq \frac{1}{2}$, it is straightforward to see that the estimate dominant lower bound term in \eqref{dominant_term} increases as $\protoCovSize$, $\scaleFineComProb$, $\omega$, the gap between $\constintraComProb$ and $\constextraComProb$ increase, while other parameters remain unchanged. Increasing the gap between $\constintraComProb$ and $\constextraComProb$, is achieved by fixing whichever $\constintraComProb$ or $\constextraComProb$ that is smaller and increasing the other one. From the definition \eqref{in_dominant_term_Def}, $\omega$ only depends on the extended community index pair $k,k'$ for which it is calculated. As $\COvar$ increases, such community pair profiles, i.e. $\mathbf{a}$ and $\mathbf{a}'$, are allowed to cover more communities. Hence, from Def.~\ref{def:PS_def} and \ref{def:balanced}, a decrease in $\COvar$ results in an increase in $\omega$.
\end{proof}

%%-----------------------------------------------------------------

\subsection{Proof of Corollary~\ref{corollary:nonsync_cond}}\label{appendix:proof_corollary_nonSyncCond}
\begin{proof}
In the following we address the recovery condition laid out in the corollary. 
% The upper bound on $\protoCovSize$ is directly taken from the definitions in \eqref{def_homogenous}. 
The 
% remaining 
conditions are derived mainly using Abbe's community recovery conditions in { Theorem 1} of the general SBM paper \citep{abbe2015community}. 

From Lemma~\ref{lemma:SBM_genSyncMat}, we obtain that when $\matC$ is binarized to  $\biMatC$ according to \eqref{matC_binarized}, $\biMatC$ can be modeled as $\biMatC \sim \textrm{SBM}(\dimMatC, \mathbf{s}, \genComConnect)$, where $\genComConnect$ is defined in \eqref{U_poissonBinomial} and $\mathbf{s}$ is the length-$\nonSyncDimMatCom{\COvar}$ prior distribution vector for the c-nodes' extended community profile vector.
The probability of the $\text{MAP}$ estimator failure to recover $\genSyncMat$ from $\biMatC \sim \textrm{SBM}(\dimMatC, \mathbf{s}, \genComConnect)$ tends to zero, if for all $k,k'\in [\nonSyncDimMatCom{\COvar}]$ and $k\neq k'$ \citep{abbe2015community}:
\begin{align}\label{general_rec_condition}
    \left[\displaystyle\lim_{\dimMatC\xrightarrow[]{} \infty}  \frac{\dimMatC}{\log\dimMatC}\scaledCHdiv(\textrm{diag}(\mathbf{s})\genComConnect_{k},\textrm{diag}(\mathbf{s})\genComConnect_{k'})\right] \geq 1
\end{align}
where $\scaledCHdiv$ is the scaled CH divergence defined in \eqref{scaled_CH_divergence_def} (The term $\frac{\dimMatC}{\log\dimMatC}$ should be the coefficient of $\genComConnect$ in the original CH divergence definition, but we excluded it from $\scaledCHdiv$ for notation simplicity). $\scaledCHdiv$ is lower-bounded for the fix parameter $t=\frac{1}{2}$:
\begin{align}\label{scaled_CH_divergence_def_half}
    \begin{array}{ll}
         \scaledCHdiv(\textrm{diag}(\mathbf{s})\genComConnect_{k},\textrm{diag}(\mathbf{s})\genComConnect_{k'}) & \geq \displaystyle\sum_{k''\in[\nonSyncDimMatCom{\COvar}]}
         \mathbf{s}_{k''}[
        \frac{\genComConnect_{kk'}+\genComConnect_{k'k''}}{2} - \sqrt{\genComConnect_{kk'}\genComConnect_{k'k''}} 
         ]
    \end{array}.
\end{align}
%\begin{align}\label{scaled_CH_divergence_def_half}
%    \begin{array}{ll}
%         \scaledCHdiv(\textrm{diag}(\mathbf{s})\genComConnect_{k},\textrm{diag}(\mathbf{s})\genComConnect_{k'}) & \geq \displaystyle\sum_{k''\in[\nonSyncDimMatCom{\COvar}]}
%         [
%        \frac{\mathbf{s}_k\genComConnect_{kk'}+\mathbf{s}_{k'}\genComConnect_{k'k''}}{2} - \sqrt{\mathbf{s}_k\genComConnect_{kk'}\mathbf{s}_{k'} \genComConnect_{k'k''}} 
%         ]
%    \end{array}.
%\end{align}
From \eqref{scaled_CH_divergence_def_half}, we can easily deduce that if for all $k,k'\in [\nonSyncDimMatCom{\COvar}]$ and $k \neq k'$, the term
$\displaystyle\sum_{k''\in[\nonSyncDimMatCom{\COvar}]}  \mathbf{s}_{k''}[
        \frac{\genComConnect_{kk'}+\genComConnect_{k'k''}}{2} - \sqrt{\genComConnect_{kk'}\genComConnect_{k'k''}} 
         ]$ is strictly greater than zero, the condition \eqref{general_rec_condition} always satisfies (i.e. the LHS of \eqref{general_rec_condition} tends to $\infty$ as  $\dimMatC\xrightarrow[]{}\infty$). In order to have strictly positive inner summation terms, from the inequality of arithmetic and geometric means, for all $k,k'\in[\nonSyncDimMatCom{\COvar}]$, it is sufficient to have:
\begin{align}\label{cond1}
    \exists k''\in[\nonSyncDimMatCom{\COvar}] : \mathbf{s}_{k''}\left[
    \frac{\genComConnect_{k,k''}+\genComConnect_{k',k''}}{2} - \sqrt{\genComConnect_{k,k''}\genComConnect_{k',k''}}
    \right]>0.
\end{align}         
Equation \eqref{cond1} happens if for all $k,k'\in[\nonSyncDimMatCom{\COvar}]$
\begin{align}\label{cond3}
    \exists k''\in[\nonSyncDimMatCom{\COvar}] : \mathbf{s}_{k''}>0 , 
	|{\genComConnect_{k,k''}} - {\genComConnect_{k',k''}}| > 0 \
\end{align}

Continuing the condition derived in \eqref{cond3}, let $k=h(\mathbf{a}),k'=h(\mathbf{a}')$, and $k''=h(\mathbf{a}'')$ for the profile vectors $\mathbf{a},\mathbf{a}',\mathbf{a}''$ and the mapping function $h$. The assumptions of balanced and CO-$\COvar$ measurements in  Def.~\ref{def:PS_def} and \ref{def:balanced} suggest that for all profile vectors $\mathbf{a},\mathbf{a}' \in \profileSet$ where $\mathbf{a} \neq \mathbf{a}'$,  there exists at least a $u\in[\dimMatCom]$ where $\frac{\mathbf{a}_u}{\protoCovSize}\geq \frac{1}{\COvar},{\mathbf{a}_u'}=0$ or $\frac{\mathbf{a}_u'}{\protoCovSize}\geq \frac{1}{\COvar},{\mathbf{a}_u}=0$. 
 Without loss of generality, we assume the former to get ${\genComConnect_{k,k''}} - {\genComConnect_{k',k''}}>0$ (exchange the roles of $\mathbf{a}$ and $\mathbf{a}'$ to get ${\genComConnect_{k,k''}} - {\genComConnect_{k',k''}}<0$). 
We consider another profile vector $\mathbf{a}''\in \profileSet$ that satisfies $\frac{\mathbf{a}_u''}{\protoCovSize}=1$ and so, $(\frac{\mathbf{a}}{\protoCovSize})^\intercal\frac{\mathbf{a}''}{\protoCovSize} \geq \frac{1}{\COvar}$ and ${\mathbf{a}'}^\intercal\mathbf{a}''=0$
% , or $(\frac{\mathbf{a}'}{\protoCovSize})^\intercal\frac{\mathbf{a}''}{\protoCovSize} \geq \frac{1}{\COvar}$ and ${\mathbf{a}}^\intercal\mathbf{a}''=0$, 
hold. 
Using the upper and lower bounds on $\genComConnect$ elements derived in Lemma \ref{lemma:SBM_genSyncMat_ULBounds}, we simplify the condition of \eqref{cond3} step-by-step:
\begin{align}\label{cond4}
\begin{array}{lll}
	|{\genComConnect_{k,k''}} - {\genComConnect_{k',k''}}|& \geq &
	|\Psi(\frac{(\intraComProb-\extraComProb)((\frac{\mathbf{a}}{\protoCovSize})^\intercal\frac{\mathbf{a}''}{\protoCovSize}-\thresholdMatCin)}{\sqrt{(\intraComProb-\extraComProb)(\frac{\mathbf{a}}{\protoCovSize})^\intercal\frac{\mathbf{a}''}{\protoCovSize}+\extraComProb}} \protoCovSize)
	-
\Psi(-\frac{(\intraComProb-\extraComProb)\thresholdMatCin}{\sqrt{\extraComProb}} \protoCovSize)|
 \\
& & -  \frac{0.7915}{\protoCovSize\sqrt{\intraComProb(1-\intraComProb)(\frac{\mathbf{a}}{\protoCovSize})^\intercal\frac{\mathbf{a}''}{\protoCovSize}+\extraComProb(1-\extraComProb)(1-(\frac{\mathbf{a}}{\protoCovSize})^\intercal\frac{\mathbf{a}''}{\protoCovSize})}}
- 
\frac{0.7915}{\protoCovSize\sqrt{\extraComProb(1-\extraComProb)}}
\\
& \geq &
|\Psi({\frac{(\intraComProb-\extraComProb)(\frac{1}{\COvar}-\thresholdMatCin)}{\sqrt{\frac{(\intraComProb-\extraComProb)}{\COvar}+\extraComProb}}} \protoCovSize) 
-
 \Psi(-{\frac{(\intraComProb-\extraComProb)\thresholdMatCin}{\sqrt{\extraComProb}}} \protoCovSize)|
 \\
& & 
-
\frac{1}{\protoCovSize} \left[
\frac{0.7915}{
\sqrt{\min(
\intraComProb(1-\intraComProb)(\frac{1}{\COvar})+\extraComProb(1-\extraComProb)(1-\frac{1}{\COvar})   ,    \intraComProb(1-\intraComProb) )}}
+\frac{0.7915}{\sqrt{\extraComProb(1-\extraComProb)}}
\right]
\\
& \geq &
|\Psi(\underbrace{\frac{(\constintraComProb-\constextraComProb)(\frac{1}{\COvar}-\thresholdMatCin)}{\sqrt{\frac{(\constintraComProb-\constextraComProb)}{\COvar}+\constextraComProb}}}_{\rho_1} \protoCovSize\sqrt{\scaleFineComProb}) 
-
 \Psi(-\underbrace{\frac{(\constintraComProb-\constextraComProb)\thresholdMatCin}{\sqrt{\constextraComProb}}}_{\rho_2} \protoCovSize\sqrt{\scaleFineComProb})|
 \\
& & 
-
\frac{1}{\protoCovSize\sqrt{\scaleFineComProb}} \left[
\frac{0.7915}{
\sqrt{\min(
\constintraComProb(1-\constintraComProb\scaleFineComProb)(\frac{1}{\COvar})+\constextraComProb(1-\constextraComProb\scaleFineComProb)(1-\frac{1}{\COvar})   ,    \constintraComProb(1-\constintraComProb\scaleFineComProb) )}}
+\frac{0.7915}{\sqrt{\constextraComProb(1-\constextraComProb\scaleFineComProb)}}
\right]
\\
&
> & 0 
\end{array}.
\end{align}
Assume a constant upper bound for $\scaleFineComProb\leq \upConstScaleFineComProb$ as $\dimMatF\geq \dimMatF_0$. Hence, we can upper bound the term:
\begin{align}
\begin{array}{ll}
\frac{0.7915}{
\sqrt{\min(
\constintraComProb(1-\constintraComProb\scaleFineComProb)(\frac{1}{\COvar})+\constextraComProb(1-\constextraComProb\scaleFineComProb)(1-\frac{1}{\COvar})   ,    \constintraComProb(1-\constintraComProb\scaleFineComProb) )}}
+\frac{0.7915}{\sqrt{\constextraComProb(1-\constextraComProb\scaleFineComProb)}}
& \\
 \leq 
\frac{0.7915}{
\sqrt{\min(
\constintraComProb(1-\constintraComProb\upConstScaleFineComProb)(\frac{1}{\COvar})+\constextraComProb(1-\constextraComProb\upConstScaleFineComProb)(1-\frac{1}{\COvar})   ,    \constintraComProb(1-\constintraComProb\upConstScaleFineComProb) )}}
+\frac{0.7915}{\sqrt{\constextraComProb(1-\constextraComProb\upConstScaleFineComProb)}} & 
\end{array}
\end{align}
To simplify the notations, we define:
\begin{align}\label{def_constants1}
\begin{array}{c}
 \sbmDegree\triangleq  \protoCovSize\sqrt{\scaleFineComProb}
,\qquad
 \rho_1 \triangleq \frac{(\constintraComProb-\constextraComProb)(\frac{1}{\COvar}-\thresholdMatCin)}{\sqrt{\frac{(\constintraComProb-\constextraComProb)}{\COvar}+\constextraComProb}}
,  \qquad
\rho_2 \triangleq \frac{(\constintraComProb-\constextraComProb)\thresholdMatCin}{\sqrt{\constextraComProb}} 
\\ 
\rho_3 \triangleq \left[
\frac{0.7915}{
\sqrt{\min(
\constintraComProb(1-\constintraComProb\scaleComProbAt{\dimMatF_0})(\frac{1}{\COvar})+\constextraComProb(1-\constextraComProb\scaleComProbAt{\dimMatF_0})(1-\frac{1}{\COvar})   ,    \constintraComProb(1-\constintraComProb\scaleFineComProb) )}}
+\frac{0.7915}{\sqrt{\constextraComProb(1-\constextraComProb\scaleComProbAt{\dimMatF_0})}}
\right]  
\end{array}.
\end{align}
To continue deriving a simpler formulation for the recovery condition, we consider two cases: $\rho_1,\rho_2>0$ or $\rho_1,\rho_2<0$. This way, we must have $\constintraComProb>\constextraComProb$ and $\constintraComProb<\constextraComProb$, respectively. So, considering both cases together: 
\begin{align}
\begin{array}{rl}
0 < \thresholdMatCin < \frac{1}{\COvar}, \quad \constintraComProb\neq\constextraComProb
\end{array}.
\end{align}
Substituting \eqref{def_constants1} and the equalities $\Psi(x)=\frac{1}{2}(1+\textrm{erf}(\frac{x}{\sqrt{2}}))$ and $\textrm{erf}(x)=-\textrm{erf}(-x)$ into \eqref{cond4}, with $\textrm{erf}(x)$ representing the Gauss Error Function, we get:
\begin{align}\label{cond5}
\begin{array}{rllllll}
|\Psi(\rho_1 \sbmDegree)-\Psi(-\rho_2 \sbmDegree)| & > & \frac{\rho_3}{\sbmDegree} & & & &
\\
|\frac{1}{2}\left[1+\textrm{erf}(\frac{\rho_1 \sbmDegree}{\sqrt{2}})\right] - \frac{1}{2}\left[1-\textrm{erf}(\frac{\rho_2 \sbmDegree}{\sqrt{2}})\right]| & > &  \frac{\rho_3}{\sbmDegree} & \mathrm{or} & |\frac{1}{2}\left[1-\textrm{erf}(\frac{-\rho_1 \sbmDegree}{\sqrt{2}})\right] - \frac{1}{2}\left[1+\textrm{erf}(\frac{-\rho_2 \sbmDegree}{\sqrt{2}})\right]| & > &  \frac{\rho_3}{\sbmDegree}\\
|\textrm{erf}(\frac{\rho_1 \sbmDegree}{\sqrt{2}}) + \textrm{erf}(\frac{\rho_2 \sbmDegree}{\sqrt{2}})| & > & \frac{2\rho_3}{\sbmDegree} & \mathrm{or} & |\textrm{erf}(\frac{-\rho_1 \sbmDegree}{\sqrt{2}}) + \textrm{erf}(\frac{-\rho_2 \sbmDegree}{\sqrt{2}})| & > & \frac{2\rho_3}{\sbmDegree}\\
|\textrm{erf}(\frac{|\rho_1| \sbmDegree}{\sqrt{2}}) + \textrm{erf}(\frac{|\rho_2| \sbmDegree}{\sqrt{2}})| & > & \frac{2\rho_3}{\sbmDegree}& & & &
\end{array}.
\end{align}
Next, we relax the condition in \eqref{cond4} using the inequality $\textrm{erf}(x)\geq 1-e^{-x^2}$ for all $x\geq 0$:
\begin{align}\label{cond6}
\begin{array}{rll}
1-e^{-{\frac{(|\rho_1| \sbmDegree)^2}{2}}} + 1-e^{-{\frac{(|\rho_2| \sbmDegree)^2}{2}}} & > & \frac{2\rho_3}{\sbmDegree} 
\\
e^{-{\frac{(\rho_1 \sbmDegree)^2}{2}}} + e^{-{\frac{(\rho_2 \sbmDegree)^2}{2}}} & < & 2-\frac{2\rho_3}{\sbmDegree} 
\end{array}.
\end{align}
We apply another relaxation to the condition in \eqref{cond6}, which yields:
\begin{align}\label{cond7}
\begin{array}{rll}
e^{-{\frac{(\max(|\rho_1|,|\rho_2|) \sbmDegree)^2}{2}}} & < & 1-\frac{\rho_3}{\sbmDegree} \\
-{\frac{(\max(|\rho_1|,|\rho_2|) \sbmDegree)^2}{2}} & < & \log(1-\frac{\rho_3}{\sbmDegree}) 
\end{array}.
\end{align}
The last relaxation is applied using $\log(x)\geq 1-\frac{1}{x}$:
\begin{align}\label{cond8}
\begin{array}{rll}
-{\frac{(\max(|\rho_1|,|\rho_2|) \sbmDegree)^2}{2}} & < & 1-\frac{1}{1-\frac{\rho_3}{\sbmDegree}} = - \frac{\rho_3}{\sbmDegree-\rho_3} \\
{\frac{(\max(|\rho_1|,|\rho_2|) \sbmDegree)^2}{2}} & > &  \frac{\rho_3}{\sbmDegree-\rho_3} \\
\sbmDegree^3 - \rho_3 \sbmDegree^2 - \underbrace{\frac{2\rho_3}{\max^2(|\rho_1|,|\rho_2|)}}_{\rho_4} & > & 0 
\end{array}.
\end{align}
The third-order polynomial in \eqref{cond8} has only one real root. Hence, the condition in \eqref{cond8} simplifies to the final form of:
\begin{align}\label{cond9}
\begin{array}{rl}
\protoCovSize\sqrt{\scaleFineComProb} > \Delta &
\end{array},
\end{align}
where
\begin{align}\label{def_constants2}
\begin{array}{c}
\Delta \triangleq \frac{1}{3}\left[  
\frac{\varpi}{\sqrt[3]{2}} + 
\frac{\sqrt[3]{2}\rho_3^2}{\varpi} + 
\rho_3
\right]
, \quad
 \rho_4 \triangleq \frac{2\rho_3}{\max^2(|\rho_1|,|\rho_2|)}, \\
 \varpi\triangleq \sqrt[3]{3\sqrt{3}\sqrt{4\rho_3^3\rho_4+27\rho_4^2}+2\rho_3^3+27\rho_4}
\end{array},
\end{align}
and the rest of the constants have already been defined in \eqref{def_constants1}. 

Next, we use the upper bound on the coverage size $\protoCovSize$ {(i.e. the number of measured fine nodes represented by a c-node)} from its definition in Def.~\ref{def_homogenous}, i.e. $\protoCovSize \leq \frac{\dimMatF}{\dimMatC}$, to further see which scaling functions $\scaleFineComProb$ is necessary to allow for the condition $\protoCovSize > \frac{\Delta}{\sqrt{\scaleFineComProb}}$ in \eqref{cond9}: 
\begin{align}\label{cond10}
\begin{array}{c}
    \frac{\dimMatF}{\dimMatC}>\frac{\Delta}{\sqrt{\scaleFineComProb}} 
    % \\ \frac{\dimMatF\sqrt{\scaleFineComProb}}{\dimMatC} > \Delta  
\end{array}
\end{align}
% As a reminder, this condition is complementary to the initial assumption of $\dimMatC\rightarrow\infty$ in \eqref{scaled_CH_divergence_def_half}. From \eqref{cond10}, for $\dimMatC\rightarrow\infty$, we must have $\dimMatF\sqrt{\scaleFineComProb}\rightarrow\infty$. Furthermore
Hence, the following is a necessary requirement to satisfy \eqref{cond10}: 
\begin{align}
    \scaleFineComProb>\Delta^2(\frac{\dimMatC}{\dimMatF})^2.
\end{align}
This completes the proof.
\end{proof}

% % --------------------------------------------------------------

\subsection{Proof of Theorem~\ref{thm:fullsync_UB_error_cond}}\label{appendix:proof_theorm_fullSyncUBCond}
\begin{proof}
We study the asymptotic behaviour of the $\text{MAP}$ failure error using \eqref{UB_MLFailureError} considering two scenarios of $\dimMatC$ being constant, or growing $\dimMatC\rightarrow\infty$. In the former case,
% i.e. when $\dimMatC=\mathcal{O}_\dimMatF(1)$, 
the error upper bound in \eqref{UB_MLFailureError} is the sum of finite terms, and $\mathbb{P}(\textrm{MAP failure}) \rightarrow 0$ 
% as $\dimMatF\rightarrow\infty$ 
if
\begin{align}\label{finiteL_zeroError_Condition}
    % \lim_{\dimMatF\rightarrow\infty} 
    I_{} \rightarrow \infty.
\end{align}
In the latter scenario, i.e. when 
$\dimMatC\rightarrow\infty$
% $\dimMatC>\mathcal{O}_\dimMatF(1)$
, it is proven in  { Theorem 3.2} of \citep{jog2015information} and { Theorem 5.1} of \citep{xu2020optimal} that the error goes to zero as:
\begin{align}\label{growingL_zeroError_Condition}
    \lim_{\dimMatC\rightarrow\infty} \frac{\dimMatC I_{}}{\dimMatCom \log{\dimMatC}}>1.
\end{align}
% $\zeta$ is the cluster imbalance constant:
% \begin{align}
%     \zeta \triangleq ,
% \end{align}
% and 
We first provide an upper bound for the $\log(.)$ term in \eqref{renyi_div_binomial}, using the inequality $\log(x)\leq x-1$ and the Taylor series expansion for $\frac{1}{\sqrt{1-x}} = 1+\frac{x}{2}+\frac{3}{8}x^2 + \cdots$:
\begin{align}\label{logterm0}
\begin{array}{ll}
     \log\left(\sqrt{(1-\intraComProb)(1-\extraComProb)}+\sqrt{\intraComProb\extraComProb} \right) & = \log(\sqrt{(1-\intraComProb)(1-\extraComProb)}) + \log(1+\frac{\sqrt{\intraComProb\extraComProb}}{\sqrt{(1-\intraComProb)(1-\extraComProb)}}) 
     \\
     & \leq \frac{1}{2}\log((1-\intraComProb)(1-\extraComProb)) + \frac{\sqrt{\intraComProb\extraComProb}}{\sqrt{(1-\intraComProb)(1-\extraComProb)}} 
     \\
     &  = \frac{1}{2}\log(1-\intraComProb-\extraComProb+\intraComProb\extraComProb) + \frac{\sqrt{\intraComProb\extraComProb}}{\sqrt{(1-\intraComProb)(1-\extraComProb)}}
     \\
     & \leq \frac{1}{2}(-\intraComProb-\extraComProb+\intraComProb\extraComProb) + \frac{\sqrt{\intraComProb\extraComProb}}{\sqrt{(1-\intraComProb)(1-\extraComProb)}}
     \\ 
     & = -\frac{\intraComProb+\extraComProb}{2} + \frac{\intraComProb\extraComProb}{2} + \sqrt{\intraComProb\extraComProb}(1+\frac{\intraComProb+\extraComProb-\intraComProb\extraComProb}{2}+\frac{3}{8}(\intraComProb+\extraComProb-\intraComProb\extraComProb)^2 + \cdots)
     \\
     & \leq -(\frac{\intraComProb+\extraComProb}{2}-\sqrt{\intraComProb\extraComProb}) + \frac{\intraComProb\extraComProb}{2} + \sqrt{\intraComProb\extraComProb}(\frac{\intraComProb+\extraComProb-\intraComProb\extraComProb}{2}+\frac{3}{8}(\intraComProb+\extraComProb-\intraComProb\extraComProb)^2 + \cdots)
\end{array}.
\end{align}
Next, we substitute \eqref{comConnect_constant_with_scaling} into \eqref{logterm0}:
\begin{align}\label{logterm1}
\begin{array}{ll}
     \log\left(\sqrt{(1-\intraComProb)(1-\extraComProb)}+\sqrt{\intraComProb\extraComProb} \right) & \leq -(\frac{\constintraComProb+\constextraComProb}{2}-\sqrt{\constintraComProb\constextraComProb})\scaleFineComProb + \frac{\constintraComProb\constextraComProb}{2}\scaleComProbAtPower{\dimMatF}{2} \\ & + \qquad\qquad\qquad \sqrt{\constintraComProb\constextraComProb}\scaleFineComProb(\frac{\constintraComProb+\constextraComProb-\constintraComProb\constextraComProb}{2}\scaleFineComProb+\frac{3}{8}(\constintraComProb+\constextraComProb-\constintraComProb\constextraComProb)^2\scaleComProbAtPower{\dimMatF}{2} + \cdots)
     \\
\end{array}.
\end{align}
As mentioned subsequent to the definition \eqref{comConnect_constant_with_scaling}, $\scaleFineComProb$ is a decreasing function with respect to its argument. Hence, we can rewrite \eqref{logterm1} for $\constintraComProb\neq\constextraComProb$, in terms of the dominant orders and denoting the \textit{equality} order by $\Theta$:
\begin{align}\label{logterm}
\begin{array}{ll}
     \log\left(\sqrt{(1-\intraComProb)(1-\extraComProb)}+\sqrt{\intraComProb\extraComProb} \right) & \leq -(\frac{\constintraComProb+\constextraComProb}{2}-\sqrt{\constintraComProb\constextraComProb})(\scaleFineComProb - \Theta(\scaleComProbAtPower{\dimMatF}{2}))
     \\
\end{array}.
\end{align}
Substituting \eqref{logterm} into \eqref{renyi_div_binomial}, \eqref{finiteL_zeroError_Condition}, and \eqref{growingL_zeroError_Condition},
yields the following conditions for $\mathbb{P}(\textrm{MAP failure}) \rightarrow 0$:
\begin{align}\label{raw_conditions0}
    \left\lbrace\begin{array}{ll}
        \left[
        (\frac{\constintraComProb+\constextraComProb}{2}-\sqrt{\constintraComProb\constextraComProb}) \protoCovSize^2 (\scaleFineComProb - \Theta(\scaleComProbAtPower{\dimMatF}{2}))\right] \rightarrow \infty & 
        % \textrm{if } \dimMatC=\mathcal{O}_\dimMatF(1)
        \\
        \left[\displaystyle\lim_{\dimMatC\rightarrow\infty}
        \frac{2(\frac{\constintraComProb+\constextraComProb}{2}-\sqrt{\constintraComProb\constextraComProb})}{\dimMatCom} \protoCovSize^2(\scaleFineComProb - \Theta(\scaleComProbAtPower{\dimMatF}{2})) \frac{\dimMatC}{\log\dimMatC}\right]>1 & 
        % \textrm{if } \dimMatC>\mathcal{O}_\dimMatF(1)
    \end{array}\right..
\end{align}
The first condition in \eqref{raw_conditions0} can only happen when $\protoCovSize\rightarrow\infty$ (since $\scaleFineComProb$ is a probability scaling and cannot approach $\infty$). From Def.~\ref{def_homogenous}, $\protoCovSize\rightarrow\infty$ only if $\dimMatF\rightarrow\infty$. This makes $(\frac{\constintraComProb+\constextraComProb}{2}-\sqrt{\constintraComProb\constextraComProb}) \protoCovSize^2 \scaleFineComProb(1 - \frac{\Theta(\scaleComProbAtPower{\dimMatF}{2})}{\scaleFineComProb}) \xrightarrow[]{\protoCovSize,\dimMatF\rightarrow\infty} \infty$ being equal to  $(\frac{\constintraComProb+\constextraComProb}{2}-\sqrt{\constintraComProb\constextraComProb}) \protoCovSize^2 \scaleFineComProb \xrightarrow[]{\protoCovSize,\dimMatF\rightarrow\infty} \infty$, since we assumed $\frac{\Theta(\scaleComProbAtPower{\dimMatF}{2})}{\scaleFineComProb}\xrightarrow[]{\dimMatF\rightarrow\infty} 0$. 

Similarly, the measurement size {(i.e. $\dimMatC$, the number of c-nodes}) can not exceed the fine graph size, i.e. $\dimMatC\leq\dimMatF$. Hence, by tending $\dimMatF\rightarrow\infty$, we simultaneously should have $\dimMatF\rightarrow\infty$. This way, the term $\displaystyle\lim_{\dimMatC\rightarrow\infty} (\scaleFineComProb - \Theta(\scaleComProbAtPower{\dimMatF}{2}))$ in the second condition in \eqref{raw_conditions0}, becomes $\displaystyle\lim_{\dimMatC,\dimMatF\rightarrow\infty} \scaleFineComProb $, since $\frac{\Theta(\scaleComProbAtPower{\dimMatF}{2})}{\scaleFineComProb}\rightarrow 0$. Accordingly, \eqref{raw_conditions0} simplifies to:
\begin{align}\label{raw_conditions}
    \left\lbrace\begin{array}{ll}
        \left[
        (\frac{\constintraComProb+\constextraComProb}{2}-\sqrt{\constintraComProb\constextraComProb}) \protoCovSize^2 \scaleFineComProb \right] \rightarrow \infty & 
        \\
        \left[\displaystyle\lim_{\dimMatC\rightarrow\infty}
        \frac{2(\frac{\constintraComProb+\constextraComProb}{2}-\sqrt{\constintraComProb\constextraComProb})}{\dimMatCom} \protoCovSize^2\scaleFineComProb \frac{\dimMatC}{\log\dimMatC}\right]>1 & 
    \end{array}\right..
\end{align}
Straightforward calculations summarize the recovery conditions in \eqref{raw_conditions} as \eqref{condition_exact_rec_regime}.
\end{proof}

%%-----------------------------------------------------------------

\subsection{Proof of Lemma \ref{lemma:SBM_genSyncMat}}\label{appendix:binarized_SBM_dist}
\begin{proof}
Using the definition of $\biMatC$ in \eqref{matC_binarized}, the probability of community-wise connectivity can be calculated as:
\begin{align}\label{community_con_prob1}
    \begin{array}{rl}
         \genComConnect_{h(\genSyncVec_i),h(\genSyncVec_j)} & = \mathbb{P}[\biMatC_{ij}=1|h(\genSyncVec_i),h(\genSyncVec_j)]  \\
         & = \mathbb{P}[\matC_{ij}\geq \protoCovSize^2(\thresholdMatCin p + (1-\thresholdMatCin)q) |\genSyncVec_i,\genSyncVec_j] 
    \end{array}.
\end{align}
Considering the distribution of $\matC$ in \eqref{homogeneous_general_dist_matC}, the proof is complete.
% i.e. $ \matC_{ij} \sim  \textrm{PoissonBinomial}(\lbrace p\rbrace^{\genSyncVec_i^\intercal\genSyncVec_j}, \lbrace q \rbrace^{\protoCovSize^2-\genSyncVec_i^\intercal\genSyncVec_j})$
\end{proof}

%%-----------------------------------------------------------------

\subsection{Proof of Lemma \ref{lemma:fullsync_UB_error}}\label{appendix:proof_lemma_fullSyncUBError}
\begin{proof}
We define $I_{}$ as the Renyi divergence of order $\frac{1}{2}$ for discrete distributions $P$ and $Q$:
\begin{align}\label{renyi_def}
   I_{} \triangleq -2\log(\sum_{\ell\geq 0} \sqrt{P(\ell) Q(\ell) }).
\end{align}
The Renyi divergence $I_{}$ evaluates the extent to which $P$ and $Q$ are different from one another. $P$ and $Q$ are the intra- and extra-community edge weight distributions, which according to \eqref{matcDist_perfSync}, correspond respectively to $\textrm{Binomial}(\protoCovSize^2,\intraComProb)$ and $\textrm{Binomial}(\protoCovSize^2,\extraComProb)$ in this work. 
Straightforward calculations yields $I_{}$ in \eqref{renyi_div_binomial}.
\end{proof}

% % --------------------------------------------------------------

\subsection{Proof of Lemma \ref{lemma:SBM_genSyncMat_ULBounds}}\label{appendix:proof_lemma_SBM_genSyncMat_ULBounds}
\begin{proof}
From Lemma~\ref{lemma:SBM_genSyncMat}, we have:
\begin{align}
    \begin{array}{ll}
       \genComConnect_{k,k'}   & =
\mathbb{P}(X\geq \protoCovSize^2(\thresholdMatCin \intraComProb + (1-\thresholdMatCin)\extraComProb))
    \end{array},
\end{align}
where $ X \sim \textrm{PoissonBinomial}(\lbrace \intraComProb\rbrace^{\mathbf{a}^\intercal\mathbf{a}'}, \lbrace \extraComProb \rbrace^{\protoCovSize^2-\mathbf{a}^\intercal\mathbf{a}'})$. 
The Poisson binomial distribution can be approximated by the standard normal distribution with mean $\mu \triangleq \intraComProb\mathbf{a}^\intercal\mathbf{a}'+\extraComProb(\protoCovSize^2-\mathbf{a}^\intercal\mathbf{a}')$ and variance $\sigma^2 \triangleq \intraComProb(1-\intraComProb)\mathbf{a}^\intercal\mathbf{a}'+\extraComProb(1-\extraComProb)(\protoCovSize^2-\mathbf{a}^\intercal\mathbf{a}')$. Adopted from the Berry-Esseen theorem, such approximation comes with upper and lower  bounds formalizing the convergence rate. This way, the Poisson Binomial cumulative distribution function in \eqref{community_con_prob1} is approximated by the cumulative distribution function for the standard normal distribution denoted by $\Psi$, and upper and lower bounded by { Theorem 3.5} in \citep{tang2019poisson}:
\begin{align}\label{community_con_prob2}
    \begin{array}{ll}
         |\genComConnect_{h(\mathbf{a}),h(\mathbf{a}')} - \Psi(\frac{\mu-\protoCovSize^2(\thresholdMatCin \intraComProb + (1-\thresholdMatCin)\extraComProb)}{\sigma})|  & \leq \frac{0.7915}{\sigma}
    \end{array}
\end{align}
Substituting $\mu,\sigma$ into \eqref{community_con_prob2} gives:
\begin{align}\label{community_con_prob3}
    \begin{array}{ll}
         |\genComConnect_{h(\mathbf{a}),h(\mathbf{a}')} - \Psi(\frac{\intraComProb\mathbf{a}^\intercal\mathbf{a}'+\extraComProb(\protoCovSize^2-\mathbf{a}^\intercal\mathbf{a}')-\protoCovSize^2(\thresholdMatCin \intraComProb + (1-\thresholdMatCin)\extraComProb)}{\sqrt{\intraComProb(1-\intraComProb)\mathbf{a}^\intercal\mathbf{a}'+\extraComProb(1-\extraComProb)(\protoCovSize^2-\mathbf{a}^\intercal\mathbf{a}')}})|  & \leq \frac{0.7915}{\sqrt{\intraComProb(1-\intraComProb)\mathbf{a}^\intercal\mathbf{a}'+\extraComProb(1-\extraComProb)(\protoCovSize^2-\mathbf{a}^\intercal\mathbf{a}')}}
    \end{array}
\end{align}
Replacing $\intraComProb,\extraComProb$ from \eqref{comConnect_constant_with_scaling}, in addition to some straightforward calculations, completes the proof.
\end{proof}

\vfill

\end{document}